\patchcmd\Gread@eps{\@inputcheck#1 }{\@inputcheck"#1"\relax}{}{}
\theoremstyle{plain}
\newtheorem{lemma}{Lemma}[section]
\newtheorem*{theorem*}{Theorem}
\newtheorem*{lemma*}{Lemma}
\newtheorem*{proposition*}{Proposition}
\newtheorem*{conjecture*}{Conjecture}
\newtheorem*{corollary*}{Corollary}
\newtheorem*{problem*}{Problem}
\newtheorem{theorem}[lemma]{Theorem}
\newtheorem{conjecture}[lemma]{Conjecture}
\newtheorem{corollary}[lemma]{Corollary}
\newtheorem{proposition}[lemma]{Proposition}
\theoremstyle{definition}
\newtheorem{definition}[lemma]{Definition}
\newtheorem{example}[lemma]{Example}
\newcommand{\CC}{\mathbb{C}}
\newcommand{\PP}{\mathbb{P}}
\newcommand{\OO}{\mathcal{O}}
\newcommand{\cO}{\mathcal{O}}
\newcommand{\mcH}{\mathcal{H}}
\newcommand{\cI}{\mathcal{I}}
\newcommand{\LL}{\mathcal{L}}
\newcommand{\cX}{\mathcal{X}}
\newcommand{\cC}{\mathcal{C}}
\newcommand{\cK}{\mathcal K}
\newcommand{\cU}{\mathcal U}
\newcommand{\cE}{\mathcal{E}}
\newcommand{\E}{\mathcal{E}}
\newcommand{\cF}{\mathcal{F}}
\newcommand{\pr}{\operatorname{pr}}
\newcommand{\Cone}{\text{\mancone}}
\newcommand{\cV}{\mathcal{V}}
\newcommand{\ZZ}{\mathbb{Z}}
\newcommand{\HH}{\begin{bf}H\end{bf}}
\newcommand{\Gr}{\mathbb{G}\mbox{r}}
\newcommand{\NN}{\begin{bf}N\end{bf}}
\newcommand{\ee}{\begin{bf}e\end{bf}}
\newcommand{\blambda}{{\boldsymbol{\lambda}}}
\newcommand{\balpha}{{\boldsymbol{\alpha}}}
\newcommand{\Chow}{\begin{CJK*}{UTF8}{gbsn}周\end{CJK*}}
\DeclareMathOperator{\chow}{\mbox{\Chow}}
\DeclareMathOperator{\rk}{rk}
\DeclareMathOperator{\sHom}{\mathcal{H}\kern -.5pt\mathit{om}}
\DeclareMathOperator{\sTor}{\mathcal{T}\kern -1.5pt\mathit{or}}
\begin{document}

\date{\today}

\author[A. B. Day]{Andy B. Day}
\address{Department of Mathematics, The Pennsylvania State University, University Park, PA 16802}
\email{andyday@psu.edu}

\author[N. Raha]{Neelarnab Raha}
\address{Department of Mathematics, The Pennsylvania State University, University Park, PA 16802}
\email{neelraha@psu.edu}

\subjclass[2020]{Primary: 32Q45. Secondary: 14M17, 14M10}
\keywords{Algebraic hyperbolicity, subvarieties of homogeneous varieties, complete intersections}

\title{Algebraic hyperbolicity of subvarieties of homogeneous varieties}

\begin{abstract}
We study the algebraic hyperbolicity of certain subvarieties of homogeneous varieties, building on the techniques introduced by Coskun-Riedl, Yeong and Mioranci. This generalizes earlier known results for hypersurfaces to higher codimensions. In particular, we observe that if $X=X_1\cap\cdots\cap X_k$ is a very general complete intersection of degree $d_j$ hypersurfaces $X_j$ in $\PP^n$ with $k\leq n-2$, then $X$ is algebraically hyperbolic if $\sum d_j\ge 2n-k$, and $X$ is not algebraically hyperbolic if $\sum d_j\le 2n-k-2$.
\end{abstract}
\maketitle

\setcounter{tocdepth}{1}
\tableofcontents

\section{Introduction}\label{sec:intro}

There are several notions of measuring how far an algebraic variety $X$ is from being isomorphic to projective space. For instance, $X$ is said to be \emph{rational} (resp. \emph{unirational)} if it is birational to some $\PP^n$ (resp., there is a dominant rational map $\PP^n\dashrightarrow X$). Another such measure is \emph{algebraic hyperbolicity}. We say that $X$ is \emph{algebraically hyperbolic} if there is an ample divisor $H$ on $X$ and a positive real number $\epsilon$ (possibly depending on $H$) such that for every integral curve $C$ on $X$, we have the inequality \begin{align}\label{ineq:alg_hyp_definition}
    2g(C)-2\ge\epsilon\deg(HC),
\end{align} where $g(C)$ denotes the geometric genus of $C$. If $X=\PP^n$, then $X$ contains lines and elliptic curves (if $n>1$), which violate the inequality (\ref{ineq:alg_hyp_definition}). Hence, algebraic hyperbolicity is indeed a way of detecting the complexity of $X$.

As the name suggests, algebraic hyperbolicity is the algebraic analog of \emph{Kobayashi} and \emph{Brody} hyperbolicity. Let $X$ be a complex manifold. We say that $X$ is \emph{Kobayashi hyperbolic} if the Kobayashi pseudometric is nondegenerate, and we say that $X$ is \emph{Brody hyperbolic} if there is no non-constant entire map $\CC\to X$. These two notions of hyperbolicity are equivalent if $X$ is compact (see \cite{Brody78}). Demailly introduced the algebraic analog, and proved that for a smooth complex projective variety, Kobayashi hyperbolicity implies algebraic hyperbolicity (see \cite{Demailly97}). The converse is open, and was conjectured by Demailly.

It is often difficult to answer whether a given algebraic variety is algebraically hyperbolic. Nevertheless, there has been significant progress in this area. For example, due to the work of several authors, the following facts are known about degree $d$ hypersurfaces $X$ in $\PP^n$: \begin{enumerate}
    \item (\cite{Xu94,Coskun-Riedl:Quintic}) Suppose $n=3$. If $d\geq5$ and $X$ is very general, then $X$ is algebraically hyperbolic. If $d\leq4$, then $X$ is not algebraically hyperbolic (as it contains rational curves).
    \item (\cite{Clemens86,Clemens86Erratum,Ein:ci01,Voisin96,Voisin96Erratum,Pacienza04,Clemens-Ran04,Clemens-Ran_Erratum,Yeong:HypsrfProdProjSpace}) Suppose $n=4$. If $d\geq7$ and $X$ is very general, then $X$ is algebraically hyperbolic. If $d\leq5$, then $X$ is not algebraically hyperbolic (as it contains lines). Now suppose $n\geq5$. If $d\geq2n-2$ and $X$ is very general, then $X$ is algebraically hyperbolic. If $d\leq2n-3$, then $X$ contains lines and is therefore not algebraically hyperbolic.
\end{enumerate} Hence, the only case that remains open is that of sextic threefolds in $\PP^4$.

There has also been a lot of progress regarding the algebraic hyperbolicity of surfaces in toric threefolds (see \cite{Haase-Ilten21,Robins23,Coskun-Riedl:VGenSurf}), and hypersurfaces in homogeneous varieties (see \cite{Yeong:HypsrfProdProjSpace,Mioranci:HypsrfHomogenVars_v3,Moraga-Yeong24,KwonSeo}).

In \cite{Coskun-Riedl:VGenSurf}, Coskun and Riedl develop a general technique for analyzing the algebraic hyperbolicity of the vanishing locus of a very general global section of a sufficiently ample vector bundle on an algebraic variety that admits a group action. The cases that $\cE$ has either high or low degrees are fairly straightforward. However, the boundary cases are much more difficult, as the anticanonical bundle does not possess strong positivity or negativity properties in these cases. A crucial technique from \cite{Coskun-Riedl:VGenSurf} is to construct certain surface scrolls that help bound the degrees of curves on $X$. This technique has been utilized to study the algebraic hyperbolicity of very general hypersurfaces in products of projective spaces (\cite{Yeong:HypsrfProdProjSpace}) and, more generally, in homogeneous varieties (\cite{Mioranci:HypsrfHomogenVars_v3}). Similar methods have also been used in \cite{Seo:Surfaces_in_Fano_3folds} to study surfaces in Fano threefolds of Picard number $1$.

\subsection*{What we do in this paper} We study the algebraic hyperbolicity of subvarieties $X$ of homogeneous varieties $A\subset\PP^{N_1}\times\cdots\times\PP^{N_m}$. We adapt the scroll construction method referred to above to this situation by developing a technique to suitably increase the dimension of the scrolls and thus bound the degrees of curves on $X$. Our results generalize earlier known results for hypersurfaces in $A$ to higher codimensions. Before stating our main results, we need to introduce some notation.

We let $A$ be a $D$-dimensional homogeneous subvariety of $\PP^{N_1}\times\cdots\times\PP^{N_m}$, and let $\cE$ be a globally generated vector bundle of rank $k\leq D-2$ on $A$. Let $L_1,\ldots,L_m$ be the pullbacks of $\OO_{\PP^{N_1}}(1),\ldots,\OO_{\PP^{N_m}}(1)$ via the natural projections $\PP^{N_1}\times\cdots\times\PP^{N_m}\twoheadrightarrow\PP^{N_i}$ restricted to $A$, and let $H_i$ be the divisor of $L_i$. Suppose that $L_1,\ldots,L_m$ form a \emph{section-dominating collection} of line bundles (see Definition \ref{def:sect_dom}) for $\cE$. We also assume that the canonical divisor class of $A$ can be written as $K_A=a_1H_1+\cdots+a_mH_m$, and that the first and the top Chern classes of $\E$ can be written as $c_1(\cE)=d_1H_1+\cdots+d_mH_m$ and $c_k(\cE)=\sum_{\balpha}d_{\balpha}H_1^{\alpha_1}\cdots H_m^{\alpha_m}$ respectively, where $\balpha=(\alpha_1,\ldots,\alpha_m)$ runs over appropriate multi-indices in $\ZZ_{\geq0}^m$. Finally, for each $1\leq i\leq m$, we let $\ee_i$ be the $i^{\mbox{\tiny th}}$ standard basis vector in $\ZZ^m$.

We are now ready to state our results. Our main result provides bounds on the coefficients of $c_1(\E)$ and $c_k(\E)$ for the vanishing locus $X$ of a very general global section of $\cE$ to be algebraically hyperbolic. Henceforth, we assume that $X$ is smooth and irreducible of codimension $k$ in $A$.

\begin{theorem}\label{thm:Intro_Main_Thm}
    (Theorem \ref{thm:alg_hyp_initial_bound_general} and Theorem \ref{thm:alg_hyp_scroll_bound_general}) Suppose that \begin{align*}
        d_i\geq D-k-a_i-1
    \end{align*} for all $i=1,\ldots,m$, and that at least one of the following is true: \begin{enumerate}
        \item The above inequality is strict for all $i$.
        \item For all $r=1,\ldots,m$, there exists an $m$-tuple $\blambda=(\lambda_1,\ldots,\lambda_m)$ of nonnegative integers with $\lambda_1+\cdots+\lambda_m=k-1$ such that $d_{\blambda+\ee_r}\neq0$ and \begin{align*}
            a_i+d_i-\frac{d_{\blambda+\ee_i}}{d_{\blambda+\ee_r}}>0
        \end{align*} for all $i\ne r$.
    \end{enumerate} Then $X$ is algebraically hyperbolic.
\end{theorem}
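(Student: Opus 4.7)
The proof is a case split following the two hypotheses; in both cases the aim is to produce, for any integral curve $C \subset X$, a bound of the form $2g(C) - 2 \geq \epsilon \deg(HC)$ with $\epsilon > 0$ depending only on $A$, $\cE$, and a chosen ample class $H$. The overall strategy follows the incidence-variety technique of Coskun-Riedl \cite{Coskun-Riedl:VGenSurf}, subsequently adapted to homogeneous varieties by Yeong and Mioranci; the novelty is the extension from codimension $1$ to general codimension $k$ via an iterated scroll construction. Because the set of curves on $X$ is parametrized by a countable union of families, it suffices to fix numerical invariants of $C$ and study the incidence
\[
\cI = \{(s, [C]) \in H^0(A, \cE) \times \Hilb(A) \;:\; C \subset Z(s)\},
\]
showing that if the genus-degree inequality fails along a component of $\cI$, then that component does not dominate $H^0(A, \cE)$, so no such $C$ can live on a very general $X = Z(s)$.

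Under hypothesis (1), I would invoke Theorem \ref{thm:alg_hyp_initial_bound_general}, which bounds curves directly via the normal bundle sequence
\[
0 \to N_{C/X} \to N_{C/A} \to \cE|_C \to 0,
\]
coupled with the section-dominating assumption on $L_1, \ldots, L_m$. Computing $\deg(K_X \cdot C)$ by adjunction produces contributions of the form $(a_i + d_i - D + k + 1) \deg(H_i \cdot C)$, and the strict inequality $d_i > D - k - a_i - 1$ in every factor forces strict positivity of each such coefficient. Summing and taking $\epsilon$ to be the minimum slack yields the required estimate.

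For hypothesis (2), where equality $d_i = D - k - a_i - 1$ may hold in some direction, the direct normal-bundle estimate is no longer strict and must be upgraded. Here I would appeal to Theorem \ref{thm:alg_hyp_scroll_bound_general}, whose proof constructs a $(k+1)$-dimensional scroll $S \subset A$ containing $C$, built iteratively by sweeping out curves through general points of $C$ in each factor $\PP^{N_i}$. The hypothesis that some top-Chern-class coefficient $d_{\blambda + \ee_r}$ is nonzero ensures that $S$ is not contained in a very general $X = Z(s)$, so that $S \cap X$ is a proper curve on $S$ containing $C$; the inequality $a_i + d_i - d_{\blambda+\ee_i}/d_{\blambda+\ee_r} > 0$ then supplies the positivity needed to run an adjunction/Bogomolov argument on a smooth model of $S \cap X$, producing the desired genus-degree bound.

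The substantive obstacle, and the crux of the paper, is the higher-codimension scroll construction underlying hypothesis (2). In the hypersurface case a surface scroll suffices and its existence is effectively classical; for codimension $k \geq 2$ one has to produce a ruled $(k+1)$-fold whose intersection with a very general $X$ is a proper curve (rather than all of $S$ or a lower-dimensional locus), and to control this intersection quantitatively via the multi-index Chern-class data $d_{\blambda + \ee_i}$. Identifying the ratio $d_{\blambda+\ee_i}/d_{\blambda+\ee_r}$ as the correct local contribution, and verifying that the section-dominating property of $L_1, \ldots, L_m$ is preserved upon restriction to $S$ so that one can still move sections and kill unwanted components, is where the main technical work lies; once the scroll is in place, standard surface-adjunction arguments close out the proof.
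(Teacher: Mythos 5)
Your case split and your treatment of hypothesis (1) match the paper: the surjection $M_{\cE}|_C\twoheadrightarrow N_{C/X}$ combined with Proposition \ref{prop:sect_dom_surjection_general} exhibits $C$ as a curve of type $(s_1,\ldots,s_m)$ with $\sum_i s_i\le\rk(N_{C/X})=D-k-1$, and inequality (\ref{ineq:2g-2>=degN}) then gives $2g-2\ge\sum_i(a_i+d_i-D+k+1)\deg(c_1(L_i|_C))$, which is exactly the estimate you describe.

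For hypothesis (2) there are two genuine gaps. First, you omit the reduction that makes the scroll construction applicable at all: since $\sum_i s_i\le D-k-1$, any curve with every $s_i\le D-k-2$ already satisfies $2g-2\ge\sum_i\deg(c_1(L_i|_C))$ under the weak inequalities, so the scroll is needed only for curves of the extremal type $s_r=D-k-1$, $s_i=0$ for $i\ne r$; without isolating this case you do not have the single-factor surjection $M_{L_r}|_C^{\oplus D-k-1}\twoheadrightarrow N_{C/X}$ on which the whole construction rests. Second, your mechanism for extracting the genus bound is not the paper's and would not obviously work. The paper does not build a $(k+1)$-fold by ``sweeping out curves in each factor,'' nor does it run an adjunction or Bogomolov argument on a model of $S\cap X$. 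It builds the same two-dimensional $\PP^{N_r}$-scroll $S$ over $C$ as in the hypersurface case, from the rank-two bundle $Q'$ attached to the rank-one quotient $Q$ of $N_{C/X}$, and then raises the dimension to $k+1$ purely Chow-theoretically by taking $\lambda_i$ generic cones in each factor (the $\blambda$-join, with $[\Join_\blambda Z]=\sum_\balpha b_\balpha\HH^{\balpha-\blambda}$). The intersection with $X$ is used not to study the curve $(\Join_\blambda S)\cap X$ intrinsically but to exploit effectivity of $((\Join_\blambda S)\cdot X-C)\cdot H_r$ in $\chow(A)$, which bounds $\deg c_1(Q)$ from below; this is where $d_{\blambda+\ee_r}\neq0$ enters (as the denominator one divides by), and the bound is fed back into $c_1(N_{C/X})=-(D-k-2)c_1(L_r|_C)+c_1(Q)$ and the genus formula (\ref{ineq:2g-2>=degN_detailed}). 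An adjunction argument on $S\cap X$ would at best control the arithmetic genus of that generally reducible, singular curve, not the geometric genus of the component $C$, so it does not substitute for the $\deg c_1(Q)$ estimate.
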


Performing a dimension count, we also show that if the coefficients of $c_1(\E)$ are small, then $X$ is not algebraically hyperbolic. More precisely, we have the following result.

\begin{theorem}
    (Theorem \ref{thm:non_alg_hyp_bound_general}) $X$ is not algebraically hyperbolic if $d_i\le D-k-a_i-3$ for some $1\leq i\leq m$.
\end{theorem}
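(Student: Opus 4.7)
The plan is a direct dimension count: under the hypothesis $d_i \le D - k - a_i - 3$, I will show that for a very general section $s\in H^0(A,\cE)$, the zero locus $X = V(s)$ contains a rational curve. Any such curve $C$ satisfies $2g(C)-2 = -2 < 0 \le \epsilon\deg(HC)$ for every $\epsilon>0$ (since $H$ is ample on $X$ and hence $\deg(HC) > 0$), so the algebraic hyperbolicity inequality (\ref{ineq:alg_hyp_definition}) fails and $X$ is not algebraically hyperbolic.

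First I would fix an index $i$ with $d_i \le D - k - a_i - 3$ and let $\Sigma$ be the Hilbert scheme of smooth rational curves $C \subset A$ with $H_i\cdot C = 1$ and $H_j\cdot C = 0$ for $j \ne i$. Because $A$ is homogeneous, $T_A$ is globally generated, so every such $C$ is free; standard deformation theory (after quotienting by $\Aut(\PP^1)$) yields
\begin{equation*}
\dim \Sigma \;=\; -K_A\cdot C + D - 3 \;=\; -a_i + D - 3.
\end{equation*}
This family is nonempty: for $A = \PP^{N_1}\times\cdots\times\PP^{N_m}$ take lines in the $i$-th factor (crossed with a point in each other), and for general rational homogeneous $A = G/P$ the existence of such minimal rational curves is classical. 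For $C \in \Sigma$, $C \cong \PP^1$ and $\cE|_C$ is globally generated of rank $k$ and degree $c_1(\cE)\cdot C = d_i$, so by Grothendieck's splitting theorem $\cE|_C \cong \bigoplus_{j=1}^k \OO_{\PP^1}(b_j)$ with $b_j \ge 0$ and $\sum b_j = d_i$; thus $h^0(C,\cE|_C) = d_i + k$.

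Next I would form the incidence variety
\begin{equation*}
I \;=\; \{(s,C) \in H^0(A,\cE) \times \Sigma : s|_C = 0\}.
\end{equation*}
The section-dominating hypothesis on $L_1,\ldots,L_m$ should imply that $H^0(A,\cE) \twoheadrightarrow H^0(C,\cE|_C)$ is surjective for a general $C \in \Sigma$, so the fiber of the second projection over such a $C$ is a linear subspace of codimension $d_i + k$ in $H^0(A,\cE)$. Consequently
\begin{equation*}
\dim I \;\ge\; \dim H^0(A,\cE) + \dim\Sigma - (d_i+k) \;\ge\; \dim H^0(A,\cE),
\end{equation*}
using $d_i + k \le D - a_i - 3 = \dim\Sigma$. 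A standard deformation argument (exhibiting one pair $(s_0,C_0) \in I$ and deforming $C_0$ in $\Sigma$) then shows that the first projection $I \to H^0(A,\cE)$ is dominant, so for very general $s$ the variety $X = V(s)$ contains a rational curve from $\Sigma$, completing the argument.

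The main obstacle will be the first step: verifying nonemptiness and the expected dimension of $\Sigma$ on the homogeneous varieties under consideration, and unpacking Definition \ref{def:sect_dom} to extract the restriction surjectivity on a dense open subset of $\Sigma$. Both should be routine given the theory of minimal rational curves on $G/P$ and the explicit form of section-dominance, but neither is entirely automatic — in particular one must match the prescribed bi-degree $(H_i\cdot C = 1, H_j\cdot C = 0)$ to a primitive class supported on $A$ and check compatibility with the section-dominating collection.
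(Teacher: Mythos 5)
Your proposal is correct and follows essentially the same route as the paper: both exhibit $\PP^{N_i}$-lines (curves with $H_i\cdot C=1$, $H_j\cdot C=0$ for $j\ne i$) on a very general $X$ via an incidence correspondence over the $(D-a_i-3)$-dimensional Fano scheme of such lines, using $h^0(\cE|_C)=d_i+k$ and the same dimension count. One small remark: you invoke the section-dominating hypothesis to get surjectivity of $H^0(A,\cE)\to H^0(C,\cE|_C)$, but the paper explicitly notes this theorem needs no such hypothesis --- and indeed it is unnecessary for your argument too, since a failure of surjectivity only enlarges the fibers of the second projection and thus strengthens the inequality $\dim I\ge h^0(\cE)$.
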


For the above theorem, we do not need to assume that $L_1,\ldots,L_m$ is a section-dominating collection for $\E$, and also $k$ may be equal to $D-1$.

Applying our above results to the case of vector bundles on $\PP^n$ that split into a direct sum of line bundles, we arrive at the following result, which sharpens Ein's result (see \cite{Ein:ci01,Ein:ci02} and \cite[Remark 3.1]{Yeong:HypsrfProdProjSpace}).

\begin{corollary}\label{IntroCorollary_ci_in_Pn}
    (Corollary \ref{cor:ci_Pn}) If $X=X_1\cap\cdots\cap X_k$ is a very general complete intersection in $\PP^n$ with $k\leq n-2$, where $X_j$ is a very general hypersurface of degree $d_j$ for each $j=1,\ldots,k$, then $X$ is algebraically hyperbolic if $\sum_j d_j\geq2n-k$, and $X$ is not algebraically hyperbolic if $\sum_j d_j\leq2n-k-2$.
\end{corollary}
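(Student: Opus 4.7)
The plan is to reduce the corollary to the two main theorems of the paper by specializing to $A = \mathbb{P}^n$ and $\mathcal{E} = \bigoplus_{j=1}^k \mathcal{O}_{\mathbb{P}^n}(d_j)$. First I would set up the dictionary between the corollary's notation and the paper's: we have $m = 1$, $D = n$, and $K_A = -(n+1)H_1$ so $a_1 = -(n+1)$. The bundle $\mathcal{E}$ is globally generated (each $d_j \geq 1$ since $X_j$ is a hypersurface), and the vanishing locus of a very general global section of $\mathcal{E}$ is exactly a very general complete intersection $X_1 \cap \cdots \cap X_k$ with $\deg X_j = d_j$; Bertini applied in this setting ensures that $X$ is smooth and irreducible of codimension $k$. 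A direct Chern-class calculation gives $c_1(\mathcal{E}) = \bigl(\sum_j d_j\bigr) H_1$ and $c_k(\mathcal{E}) = \bigl(\prod_j d_j\bigr) H_1^k$, so in the paper's notation the sole coefficient of $c_1(\mathcal{E})$ is $\sum_j d_j$ and the sole nonzero coefficient of $c_k(\mathcal{E})$ is $d_{(k)} = \prod_j d_j$.

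Next I would verify the hypothesis that $L_1 = \mathcal{O}_{\mathbb{P}^n}(1)$ forms a section-dominating collection for $\mathcal{E}$ in the sense of Definition \ref{def:sect_dom}. Because $\mathcal{E}$ splits as a direct sum of very ample line bundles and the multiplication maps $H^0(\mathcal{O}(1)) \otimes H^0(\mathcal{O}(d_j - 1)) \to H^0(\mathcal{O}(d_j))$ are surjective whenever $d_j \geq 1$, this is essentially formal for direct sums of line bundles on projective space; this is the step I expect to require the most care in matching to the precise form of the paper's definition, but it introduces no real difficulty.

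For the positive direction, the inequality $\sum_j d_j \geq 2n - k$ is exactly $d_1 \geq D - k - a_1 - 1 = n - k + (n+1) - 1 = 2n - k$, so the running hypothesis of Theorem \ref{thm:Intro_Main_Thm} holds. If $\sum_j d_j > 2n - k$ I would invoke condition (1). At the boundary $\sum_j d_j = 2n - k$ I would invoke condition (2), with $r = 1$ and $\boldsymbol{\lambda} = (k-1)$: then $d_{\boldsymbol{\lambda} + \mathbf{e}_1} = d_{(k)} = \prod_j d_j$ is nonzero since all $d_j \geq 1$, and the remaining inequality "for all $i \neq r$" is vacuous because $m = 1$. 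In either case Theorem \ref{thm:Intro_Main_Thm} gives algebraic hyperbolicity.

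For the negative direction, the inequality $\sum_j d_j \leq 2n - k - 2$ translates to $d_1 \leq D - k - a_1 - 3 = 2n - k - 2$, matching the hypothesis of Theorem \ref{thm:non_alg_hyp_bound_general} verbatim; note that the non-hyperbolicity theorem is explicitly stated to require neither the section-dominating hypothesis nor the restriction $k \leq D - 2$, so no further checking is needed. Overall the corollary is a straightforward unpacking of the single-factor case $m = 1$ of the general theorems; the only non-cosmetic step is verifying the section-dominating property for $\bigoplus_j \mathcal{O}(d_j)$, which will be where most of the writing goes.
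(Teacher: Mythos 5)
Your proposal is correct and follows exactly the paper's route: the paper derives Corollary \ref{cor:ci_Pn} by specializing Theorems \ref{thm:alg_hyp_complete} and \ref{thm:non_alg_hyp_ci} to $A=\PP^n$ (so $m=1$, $D=n$, $a_1=-(n+1)$), and your arithmetic ($D-k-a_1-1=2n-k$, $D-k-a_1-3=2n-k-2$), your use of condition (2) with $\blambda=(k-1)$ and $d_{(k)}=\prod_j d_j\neq0$ at the boundary, and your appeal to the section-dominating property of $\OO(1)$ for a direct sum of $\OO(d_j)$'s (the paper's Example \ref{eg:sect_dom_basic_example} plus the remark at the start of \S\ref{sec:comp_int}) all match the paper's argument.
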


Moraga and Yeong have recently made the following conjecture.

\begin{conjecture}
    (\cite[Conjecture 1.1]{Moraga-Yeong24}) Let $Y$ be a smooth projective variety of dimension at least $2$, and let $L$ be an ample line bundle on $Y$. Then a very general element of the linear system $|K_Y+(3\dim(Y)+1)c_1(L)|$ is an algebraically hyperbolic variety.
\end{conjecture}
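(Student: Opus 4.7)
The plan is to extend the scroll technique of Coskun-Riedl, Yeong, and Mioranci from homogeneous ambient varieties to an arbitrary smooth projective $Y$. Writing $n=\dim Y$ and $D=K_Y+(3n+1)L$, adjunction gives
\[
K_X=(K_Y+D)|_X=2(K_Y+(n+1)L)|_X+(n-1)L|_X.
\]
Since $K_Y+(n+1)L$ is nef by Mori's cone theorem and $n-1\geq 1$, the canonical class $K_X$ is ample, and we take $L|_X$ as the test ample divisor. The goal is then to produce a uniform $\epsilon>0$ such that $2g(C)-2\geq\epsilon\,L\cdot C$ for every integral curve $C\subset X$.

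First, I would verify that $|D|$ is base-point free (by the effective Angehrn-Siu/Koll\'ar bound, which $3n+1$ comfortably exceeds) and that its general member is smooth and irreducible (Bertini). Following Coskun-Riedl, for each integral $C\subset X$ one considers the incidence $\mathcal{I}=\{(X',C'):C'\subset X'\in|D|\}$. A standard Hilbert-scheme dimension count shows that for $X$ very general, either $C$ moves within $\mathcal{I}$ and the corresponding $C'$ sweep out a positive-dimensional $\Sigma\subset Y$, or the numerical type of $C$ is so constrained that it already satisfies the hyperbolicity inequality. In the former case, I would build a surface scroll $S\to B$ over $\Sigma$ and pass to its normalization $\tilde{S}$; the relative canonical class $K_{\tilde{S}/B}$ inherits positivity from $K_X$, and intersection theory on $\tilde{S}$ yields the required inequality, just as in Theorem \ref{thm:Intro_Main_Thm}.

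The main obstacle is the absence of a group action on $Y$. In the homogeneous setting the scroll is produced by translating a single curve by the group, which supplies both a moving family and sharp control of its degree. For general $Y$ this must be replaced by a construction driven entirely by the movability of $|D|$, and the effective jet-separation statements for $|K_Y+mL|$ needed to push $C$ through arbitrary points of $Y$ are both delicate and coefficient-sensitive. Moreover, the decomposition above leaves only $n-1$ units of ample slack after paying $n+1$ for Fujita-type nefness and $2n$ for the Coskun-Riedl degree count; matching the scroll construction to this slack, uniformly over all smooth projective $Y$, is the principal technical hurdle and explains why the conjecture remains open.
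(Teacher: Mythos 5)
The statement you are trying to prove is not a theorem of this paper at all: it is an open conjecture of Moraga and Yeong, quoted verbatim, and the paper offers no proof of it. What the paper does is verify the conjecture in a very restricted special case, namely when $Y$ is itself the zero locus of a very general section of a split globally generated bundle $\E'$ on a homogeneous variety $A\subset\PP^{N_1}\times\cdots\times\PP^{N_m}$, the line bundle comes from a section-dominating collection $L_1,\ldots,L_m$ on $A$, and the numerical hypotheses $d_i\geq D-k-a_i$ of Theorem \ref{thm:alg_hyp_initial_bound_general} are met (which the coefficient $3\dim(Y)+1$ guarantees under the stated assumption $a_i\geq -D-1$). So there is no ``paper's proof'' to match your argument against; a complete proof of the general statement would be a substantial new result.

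As a proof of the general conjecture, your proposal has genuine gaps that you yourself partly flag. The decisive one is the absence of a transitive group action: in the Coskun--Riedl framework the action of $G$ on $A$ is what produces the $G$-invariant subvariety $\cU\subset\mcH$ with $\cU\to H^0(\cE)$ \'etale, and hence the surjection $M_{\cE}|_C\twoheadrightarrow N_{C/X}$ that drives every genus bound in the paper, including the scroll construction. Replacing this by ``a construction driven entirely by the movability of $|D|$'' is not a known technique; it is precisely the content of the open problem, so the argument does not close. There is also a quantitative error earlier in your sketch: the Angehrn--Siu bound gives base-point freeness of $K_Y+mL$ only for $m\geq\tfrac12(n^2+n+2)$, which exceeds $3n+1$ once $\dim(Y)\geq 6$, so even the preliminary claim that $|K_Y+(3n+1)L|$ is base-point free is not available from effective results in general (Fujita's conjecture, which would suffice, is itself open). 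Finally, note that ampleness of $K_X$, which your adjunction computation does establish, is far from implying algebraic hyperbolicity, so the first paragraph of your argument carries no weight toward the conclusion on its own.
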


Suppose that $a_i\geq-D-1$ for each $1\leq i\leq m$ (recall that in our setup, $D=\dim(A)$ and $K_A=a_1H_1+\cdots+a_mH_m$). Consider the vector bundle \begin{align*}
    \E':=\bigoplus_{j=1}^{k-1}\bigotimes_{i=1}^mL_i^{\otimes d_{i,j}}
\end{align*} on $A$ for some positive integers $d_{i,j}$ with $k\leq D-2$, and let $Y$ be the zero locus of a very general global section of $\E'$. Assume that $Y$ is of codimension $k-1$ in $A$, so that $Y$ is a complete intersection in $A$. Let $\LL$ be an ample line bundle of the form $\bigotimes_{i=1}^mL_i^{\otimes b_i}$ on $A$ for some positive integers $b_1,\ldots,b_m$. Let $\E:=\E'\oplus\LL$, and let $X$ be the zero locus of a very general global section of $\E$. We assume that $X$ is smooth and irreducible of codimension $k$ in $A$. Then Theorem \ref{thm:Intro_Main_Thm}(1) verifies the above conjecture for $Y$ and $\LL|_Y$, under the assumption that $L_1,\ldots,L_m$ form a section-dominating collection of line bundles for $\E$.

In particular, if $Y$ is a complete intersection of codimension $k-1$ in $A=\PP^n$ with $k\le n-2$, then by \cite[Theorem A]{Badescu} (also see \cite{SGA2,Hartshorne:AmpleSubvars}) any ample line bundle on $Y$ is of the form $\cO_Y(d)$ for some positive integer $d$. Thus, the above conjecture is true for $Y$ due to Ein's result.

\subsection*{Organization of the paper} In \S\ref{sec:prelim}, we introduce some notation and our setup, and recall some basic definitions and facts about \emph{Lazarsfeld-Mukai bundles} and curves on subvarieties of homogeneous varieties. In \S\ref{sec:gen_subvar}, we prove our bounds regarding the algebraic hyperbolicity of subvarieties of homogeneous varieties. In \S\ref{sec:comp_int}, we apply those results to the case of complete intersection subvarieties.

\subsection*{Acknowledgments} We would like to thank Izzet Coskun, Jack Huizenga, John Lesieutre, Eric Riedl, Sharon Robins, Ashutosh Roy Choudhury, Julie Tzu-Yueh Wang, Wern Yeong, and several graduate students at Penn State (Andr\'{e}s Gomez-Colunga, Satwata Hans, Eugene Henninger-Voss, and Pisya Vikash) for stimulating and helpful discussions.

\section{Preliminaries}\label{sec:prelim}

We shall work over $\CC$ throughout this paper. In this section, we describe our general setup and collect some basic facts and definitions. We begin by fixing some notation.

\subsection{Notational conventions}\label{subsec:notation}

\begin{itemize}
    \item We shall denote the Chow ring of a variety $Z$ as $\chow(Z)$, since $\chow$ is the Chinese character for Chow. If no confusion is likely to arise, we shall abuse notation and write the class $[Z']$ of a subvariety $Z'\subset Z$ in $\chow(Z)$ as $Z'$ itself.
    \item Given positive integers $N_1,\ldots,N_m$, the $i^{\mbox{\tiny th}}$ projection $\PP^{N_1}\times\cdots\times\PP^{N_m}\twoheadrightarrow\PP^{N_i}$ will be denoted by $\pr_i$. The pullback of the hyperplane class on $\PP^{N_i}$ via $\pr_i$ will be written as $H_i$, and the line bundle $\OO(H_i)$ will be denoted by $L_i$. For any subvariety $Z$ of $\PP^{N_1}\times\cdots\times\PP^{N_m}$, we shall abuse notation and write (the class of) $H_i|_Z$ as $H_i$ itself, and the line bundle $L_i\otimes\OO_Z$ as $L_i$ itself. Furthermore, we denote the ample class $H_1+\cdots+H_m$ on $Z$ by $H$.
    \item Given any $m$-tuple $\blambda=(\lambda_1,\ldots,\lambda_m)$ of nonnegative integers, we shall write \begin{align*}
        |\blambda|:=\lambda_1+\cdots+\lambda_m,
    \end{align*} and $\HH^{\blambda}$ will denote the class $$H_1^{\lambda_1}\cdots H_m^{\lambda_m}$$ in $\chow(Z)$, where $Z$ is any subvariety of $\PP^{N_1}\times\cdots\times\PP^{N_m}$. The $m$-tuple $(N_1,\ldots,N_m)$ will be denoted by $\NN$. As usual, $\ee_i$ will denote the $i^{\mbox{\tiny th}}$ standard basis vector in $\ZZ^m$, for $i=1,\ldots,m$.
\end{itemize}

\subsection{Section-dominating line bundles}

We now describe the notion of \emph{section-dominating line bundles}, as in \cite{Coskun-Riedl:VGenSurf}.

\begin{definition}\label{def:sect_dom}
    (\cite[Definition 2.3]{Coskun-Riedl:VGenSurf}) Given a vector bundle $\cF$ on a smooth projective variety $Z$, we say that a collection $\LL_1,\ldots,\LL_n$ of non-trivial globally generated line bundles on $Z$ is a \emph{section-dominating collection of line bundles for }$\cF$ if each $\LL_i^{\vee}\otimes\cF$ is globally generated and the natural map \begin{align*}
    \bigoplus_{i=1}^n\left(H^0(\cI_p\otimes\LL_i)\otimes H^0(\LL_i^{\vee}\otimes\cF)\right)\to H^0(\cI_p\otimes\cF)
\end{align*} is surjective for each $p\in Z$.
\end{definition}

\begin{example}\label{eg:sect_dom_basic_example}(\cite[Examples 2.4 and 2.6]{Mioranci:HypsrfHomogenVars_v3}).
     The line bundles $L_1=\OO(H_1),\ldots,L_m=\OO(H_m)$ on $Z:=\PP^{N_1}\times\cdots\times\PP^{N_m}$ form a section-dominating collection for $\cF:=d_1L_1\otimes\cdots\otimes d_mL_m$ for any positive integers $d_1,\ldots,d_m$.
\end{example}

\subsection{Lazarsfeld-Mukai bundles}

\begin{definition}
    Given any vector bundle $\cF$ on a smooth projective variety $Z$ with $H^0(\cF)\neq0$, the \emph{Lazarsfeld-Mukai bundle} $M_{\cF}$ associated to $\cF$ is defined to be the kernel of the canonical evaluation map $H^0(\cF)\otimes\OO\to\cF$.
\end{definition}

In particular, if $\cF$ is globally generated, then we have an exact sequence \begin{align*}
    0\to M_{\cF}\to H^0(\cF)\otimes\OO\to\cF\to0
\end{align*} of vector bundles on $Z$.

Note that if $\cK$ is any subbundle of a direct sum of Lazarsfeld-Mukai bundles, then $\deg(\cK)\leq0$ because $\cK$ is a subbundle of a trivial bundle.

Lazarsfeld-Mukai bundles associated to a section-dominating collection of line bundles form a somewhat simpler collection of vector bundles that can be used to bound the degrees of curves on subvarieties. A crucial result in this direction is the following.

\begin{proposition}\label{prop:sect_dom_surjection_general}(\cite[Proposition 2.7]{Coskun-Riedl:VGenSurf}).
    Let $\cF$ be a globally generated vector bundle on a smooth projective variety $Z$, and let $\LL_1,\ldots,\LL_n$ be a section-dominating collection of line bundles for $\cF$. Then there is a surjection \begin{align*}
        \bigoplus_{i=1}^n M_{\LL_i}^{\oplus s_i}\twoheadrightarrow M_{\cF}
    \end{align*} for some nonnegative integers $s_1,\ldots,s_n$.
\end{proposition}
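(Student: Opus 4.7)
The plan is to exhibit an explicit canonical morphism $\bigoplus_i M_{\LL_i}^{\oplus s_i} \to M_{\cF}$ with $s_i := h^0(\LL_i^{\vee}\otimes\cF)$, and then to verify that it is surjective by checking surjectivity on fibers and invoking Nakayama's lemma. The section-dominating hypothesis will be used precisely at the fiberwise step.

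To construct the map, for each $i$ I would start with the evaluation map of $\LL_i^{\vee}\otimes\cF$ (which is globally generated by hypothesis) and twist by $\LL_i$ to obtain a morphism $\mu_i\colon \LL_i\otimes H^0(\LL_i^{\vee}\otimes\cF)\to\cF$. On global sections this is the natural multiplication map $H^0(\LL_i)\otimes H^0(\LL_i^{\vee}\otimes\cF)\to H^0(\cF)$. Stacking $\mu_i$ against the evaluation map of $\LL_i$, one obtains a commutative square
$$
\begin{CD}
H^0(\LL_i)\otimes H^0(\LL_i^{\vee}\otimes\cF)\otimes\OO @>>> \LL_i\otimes H^0(\LL_i^{\vee}\otimes\cF) \\
@VVV @VV{\mu_i}V \\
H^0(\cF)\otimes\OO @>>> \cF
\end{CD}
$$
whose horizontal kernels are $M_{\LL_i}^{\oplus s_i}$ and $M_{\cF}$ respectively. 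This yields an induced morphism $M_{\LL_i}^{\oplus s_i}\to M_{\cF}$ on kernels, and summing over $i$ gives a candidate surjection $\Phi\colon\bigoplus_{i=1}^n M_{\LL_i}^{\oplus s_i}\to M_{\cF}$.

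Next I would verify surjectivity of $\Phi$ fiberwise. Since $\cF$ and each $\LL_i$ are locally free, tensoring each defining short exact sequence with the residue field $k(p)$ at any point $p\in Z$ remains exact, which identifies $M_{\cF}\otimes k(p)\cong\ker(H^0(\cF)\to\cF|_p)=H^0(\cI_p\otimes\cF)$ and $M_{\LL_i}\otimes k(p)\cong H^0(\cI_p\otimes\LL_i)$. Tracing through the construction of $\Phi$, the induced map of fibers at $p$ is exactly
$$\bigoplus_{i=1}^n H^0(\cI_p\otimes\LL_i)\otimes H^0(\LL_i^{\vee}\otimes\cF)\to H^0(\cI_p\otimes\cF),$$
which is the very map required to be surjective in Definition \ref{def:sect_dom}. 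Surjectivity at every fiber, together with Nakayama's lemma, gives surjectivity of $\Phi$ as a map of coherent sheaves.

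The main obstacle, and essentially the only real content beyond the formal diagram chase, is verifying that the kernel map obtained from the square above is genuinely the multiplication map appearing in the section-dominating definition once restricted to fibers. This amounts to a careful identification of the kernel of the evaluation map $H^0(\cF)\otimes\OO\to\cF$ after tensoring with $k(p)$ with the subspace of sections vanishing at $p$, and tracking how multiplication by a local section of $\LL_i^{\vee}\otimes\cF$ carries $H^0(\cI_p\otimes\LL_i)$ into $H^0(\cI_p\otimes\cF)$; both are routine once the locally free hypothesis is used to kill the relevant $\sTor_1$ terms.
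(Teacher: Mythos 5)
Your argument is correct. Note that the paper itself gives no proof of this statement---it is quoted verbatim from \cite[Proposition 2.7]{Coskun-Riedl:VGenSurf}---but your construction (the map induced on kernels by the multiplication square, with $s_i=h^0(\LL_i^{\vee}\otimes\cF)$, followed by the fiberwise check that reduces surjectivity to exactly the condition in Definition \ref{def:sect_dom} via Nakayama) is precisely the standard proof given in that reference, and all the supporting points you flag (local freeness of the Lazarsfeld--Mukai bundles, the $\sTor_1$ vanishing identifying $M_{\cF}\otimes k(p)$ with $H^0(\cI_p\otimes\cF)$) are handled correctly.
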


\subsection{Our setup}\label{subsec:setup}

Throughout, we work with a $D$-dimensional smooth projective subvariety $A$ of $\PP^{N_1}\times\cdots\times\PP^{N_m}$ for some positive integers $N_1,\ldots,N_m$. We assume that $A$ is homogeneous with a transitive action by an algebraic group $G$, and that the canonical divisor class of $A$ can be written as $$K_A=a_1H_1+\cdots+a_mH_m.$$ 

We let $\cE$ be a globally generated vector bundle of rank $k$ on $A$ with $1\leq k\le D-2$. We shall assume that \begin{align*}
    L_1=\OO(H_1),\ldots,L_m=\OO(H_m)
\end{align*} form a section-dominating collection of line bundles for $\E$. We also assume that the first and the top Chern classes of $\E$ can be expressed as $$c_1(\E)=d_1H_1+\cdots+d_mH_m\mbox{ and }c_k(\E)=\sum_{|\balpha|=k}d_\balpha\HH^\balpha$$ respectively, and that the vanishing locus $X$ of a very general global section of $\cE$ is a smooth, irreducible subvariety of codimension $k$ in $A$. Our goal in this paper is to investigate the algebraic hyperbolicity of $X$. Note that the class of $X$ in $\chow(A)$ is $c_k(\E)$, and by the tangent normal exact sequence
\begin{align*}
    0\to T_X\to T_A|_X\to\E|_X\to0,
\end{align*}
the canonical class of $X$ is $$K_X=K_A\cdot X+c_1(\E|_X)=(a_1+d_1)H_1+\cdots+(a_m+d_m)H_m.$$ Let $\cX_0\to H^0(\cE)$ be the universal vanishing locus. Suppose that a very general fiber $X$ of $\cX_0$ contains a curve $C$ of geometric genus $g$ and degree $e$ with respect to the ample class $H$ on $X$, i.e., $H\cdot C=e$. We have a relative Hilbert scheme $\mcH\to H^0(\cE)$ with universal curve $\cC_0\to\cX_0$, with the general fiber of $\cC_0\to\mcH$ being a curve of geometric genus $g$ and $H$-degree $e$. By a general argument, there is a $G$-invariant subvariety $\cU$ of $\mcH$ such that the map $\cU\to H^0(\cE)$ is \'etale. Restricting $\cC_0$ to $\cU$, taking a resolution of the general fiber and possibly further shrinking $\cU$ to $\cV$, we get a smooth family $\cC\to\cV$ whose general fiber is a smooth curve of genus $g$. Pulling back $\cX_0$, we get a family $\cX\to\cV$ with a natural, generically injective map $\cC\to\cX$. Let $\psi_1:\cX\to\cV$ and $\psi_2:\cX\to A$ be the natural projections. We refer the reader to \cite{Coskun-Riedl:VGenSurf,Pacienza03,Moraga-Yeong24}. For very general $t\in\cV$, we let $X=X_t$ and $C=C_t$ be the fibers of $\cX$ and $\cC$ respectively over $t$.

By the proof of \cite[Proposition 2.1]{Coskun-Riedl:VGenSurf}, it follows that there is a surjection $M_{\cE}|_C\twoheadrightarrow N_{C/X}$. Combining this with Proposition \ref{prop:sect_dom_surjection_general}, we get a surjection \begin{align}\label{eq:sect_dom_surjection_normal_bundle}
        \bigoplus_{i=1}^m M_{L_i}|_C^{\oplus s_i}\twoheadrightarrow N_{C/X}
    \end{align} on $C$ for some nonnegative integers $s_1,\ldots,s_n$. In fact, one might be able to choose smaller values for the $s_i$'s than those given by Proposition \ref{prop:sect_dom_surjection_general}. This leads us to the following definition.
    
\begin{definition}
    (\cite[Definition 3.2]{Yeong:HypsrfProdProjSpace}) We say that $C\subset X$ is a curve \emph{of type} $(s_1,\ldots,s_m)\in\ZZ_{\geq0}^m$ if there is a surjective map \begin{align*}
        \bigoplus_{i=1}^m M_{L_i}|_C^{\oplus s_i}\twoheadrightarrow N_{C/X},
    \end{align*} with no summand having torsion image.
\end{definition}

Note that if $C\subset X$ is a curve of type $(s_1,\ldots,s_m)$, then \begin{align}\label{ineq:s_up_bound}
    \sum_{i=1}^m s_i\leq\rk(N_{C/X})=D-k-1.
\end{align} We should also point out that the type of $C$ is not necessarily unique.

We now recall a preliminary lemma bounding the genus of the curve $C$ in $X$ which will be used later in this paper. An intermediate step in the proof of this lemma is to bound the degree of the normal bundle of $C$ in $X$.

\begin{lemma}
    (\cite[Lemma 2.8]{Mioranci:HypsrfHomogenVars_v3}) Let $C\subset X$ be a curve of geometric genus $g$ and type $(s_1,\ldots,s_m)$. Then \begin{align}\label{ineq:2g-2>=degN}
        2g-2\ge\sum_{i=1}^m(a_i+d_i-s_i)\deg(c_1(L_i|_C)).
    \end{align}
\end{lemma}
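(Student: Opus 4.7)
The plan is to combine adjunction on the smooth genus $g$ curve $C$ with a lower bound on $\deg(\det N_{C/X})$ coming from the surjection witnessing that $C$ has type $(s_1,\ldots,s_m)$. Starting from the tangent–normal exact sequence
$$0\to T_C\to f^*T_X\to N_{C/X}\to 0$$
on $C$, where $f\colon C\to X$ is the generically injective map provided by the setup in \S\ref{subsec:setup}, I would take determinants to obtain
$$2g-2=\deg(f^*K_X)+\deg(\det N_{C/X}).$$
Substituting $K_X=(a_1+d_1)H_1+\cdots+(a_m+d_m)H_m$ rewrites the first summand as $\sum_i(a_i+d_i)\deg(c_1(L_i|_C))$, so the desired inequality is equivalent to the lower bound
$$\deg(\det N_{C/X})\;\ge\;-\sum_{i=1}^m s_i\deg(c_1(L_i|_C)).$$

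To establish this lower bound I would use the defining surjection of the type, namely
$$\bigoplus_{i=1}^m M_{L_i}|_C^{\oplus s_i}\twoheadrightarrow N_{C/X},$$
whose kernel $\cK$ is a vector bundle on $C$. Taking determinants in the corresponding short exact sequence yields
$$\deg(\det N_{C/X})=\sum_{i=1}^m s_i\deg(\det M_{L_i}|_C)-\deg(\cK).$$
From the defining sequence $0\to M_{L_i}\to H^0(L_i)\otimes\OO\to L_i\to 0$ one reads off $\det M_{L_i}=L_i^{\vee}$, so $\deg(\det M_{L_i}|_C)=-\deg(c_1(L_i|_C))$. Moreover, $\cK\hookrightarrow\bigoplus_i M_{L_i}|_C^{\oplus s_i}$ is a subbundle of the trivial bundle $\bigoplus_i H^0(L_i)^{\oplus s_i}\otimes\OO_C$, so by the observation recalled just before Proposition \ref{prop:sect_dom_surjection_general}, $\deg(\cK)\le 0$. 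Hence $-\deg(\cK)\ge 0$ and the required lower bound follows, giving (\ref{ineq:2g-2>=degN}) after substitution.

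The bookkeeping of Chern classes is essentially routine once the tangent–normal sequence and the type surjection are in place, and the only point I expect requires a small comment is the meaning of $N_{C/X}$ when $f$ is merely generically injective rather than an embedding: any torsion subsheaf of the normal sheaf contributes nonnegatively to $\deg(\det N_{C/X})$ (via its length), so working with the normal sheaf only strengthens the inequality and the argument goes through unchanged. Thus the heart of the proof is the identification of the two ingredients — the adjunction identity and the Lazarsfeld–Mukai surjection — and checking that the kernel $\cK$ inherits the nonpositivity that subbundles of trivial bundles enjoy on a curve.
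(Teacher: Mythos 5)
Your proposal is correct and follows essentially the same route as the paper's proof: adjunction via the tangent--normal sequence of $C$ in $X$, followed by bounding $\deg N_{C/X}$ from below using the type surjection and the fact that its kernel, being a subbundle of a trivial bundle, has nonpositive degree. Your closing remark about torsion contributions when the map is only generically injective is a reasonable extra clarification, but the argument is otherwise the same as the one in the text.
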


\begin{proof}
    Consider the tangent normal sequence \begin{align*}
    0\to T_C\to T_X|_C\to N_{C/X}\to0
\end{align*} of $C$ in $X$. It shows that \begin{align*}
    2g-2=&\;-\deg T_C\\
    =&\;-\deg(T_X|_C)+\deg N_{C/X}\\
    =&\;\deg(K_X|C)+\deg N_{C/X}\\
    =&\;\sum_{i=1}^m(a_i+d_i)\deg(c_1(L_i|_C))+\deg N_{C/X}\tag{5}\label{ineq:2g-2>=degN_detailed}\\
    \ge&\sum_{i=1}^m(a_i+d_i-s_i)\deg(c_1(L_i|_C)),
\end{align*} where the last inequality can be justified as follows.

Since $C$ is of type $(s_1,\ldots,s_m)$, there is a surjection \begin{align*}
        \gamma:\bigoplus_{i=1}^m M_{L_i}|_C^{\oplus s_i}\twoheadrightarrow N_{C/X}.
    \end{align*} Thus, \begin{align*}
        \deg N_{C/X}=&\;\deg\left(\bigoplus_{i=1}^m M_{L_i}|_C^{\oplus s_i}\right)-\deg(\ker\gamma))\\
        \ge&\;\deg\left(\bigoplus_{i=1}^m M_{L_i}|_C^{\oplus s_i}\right)\\
        =&\;\sum_{i=1}^m s_i\deg(M_{L_i}|_C)\\
        =&\;-\sum_{i=1}^m s_i\deg(c_1(L_i|_C)).
    \end{align*}
\end{proof}

\section{General subvarieties}\label{sec:gen_subvar}

Let $X$ and $C$ be as in \S\ref{subsec:setup}. Suppose that $C$ is of type $(s_1,\ldots,s_m)$. By inequality (\ref{ineq:s_up_bound}), each $s_i\le D-k-1$. So the inequality (\ref{ineq:2g-2>=degN}) becomes
\begin{align*}
    2g-2\ge\sum_{i=1}^m(a_i+d_i-D+k+1)\deg(c_1(L_i|_C)).
\end{align*}
This leads to the following result.
\begin{theorem}\label{thm:alg_hyp_initial_bound_general}
    $X$ is algebraic hyperbolic if $d_i>D-k-a_i-1$ for all $i=1,\ldots,m$.
\end{theorem}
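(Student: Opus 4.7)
The plan is to apply the displayed inequality (\ref{ineq:2g-2>=degN}) from the previous lemma together with the bound $s_i \leq D-k-1$ noted just above the theorem, and then exploit the hypothesis to extract a uniform positive constant $\epsilon$.

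First I would set $\epsilon_i := a_i + d_i - D + k + 1$ for each $i$. Under the assumption $d_i > D - k - a_i - 1$, each $\epsilon_i$ is a positive integer (all of $a_i, d_i, D, k$ are integers). Since $C$ has some type $(s_1,\ldots,s_m)$ with $s_i \leq D-k-1$, and since the $L_i$ are globally generated so $\deg(c_1(L_i|_C)) \geq 0$, the coefficient $(a_i + d_i - s_i)$ in (\ref{ineq:2g-2>=degN}) is at least $\epsilon_i$. This gives
\begin{align*}
    2g(C) - 2 \;\geq\; \sum_{i=1}^m \epsilon_i \deg(c_1(L_i|_C)).
\end{align*}

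Next I would set $\epsilon := \min_i \epsilon_i > 0$. Since every term $\deg(c_1(L_i|_C))$ is nonnegative, I can bound the right-hand side from below by $\epsilon \sum_i \deg(c_1(L_i|_C)) = \epsilon \cdot \deg(H \cdot C)$, where $H = H_1 + \cdots + H_m$ is the chosen ample class on $X$ from the notational conventions. Thus
\begin{align*}
    2g(C) - 2 \;\geq\; \epsilon \cdot \deg(H \cdot C),
\end{align*}
and since the constant $\epsilon$ depends only on the numerical data $a_i, d_i, D, k$ (not on $C$), this is precisely the definition of algebraic hyperbolicity for $X$ with respect to the ample class $H$.

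There is essentially no obstacle here: the main content of the theorem has already been absorbed into the preliminary lemma, and all that remains is the bookkeeping of noting that the coefficients are strictly positive and that the $L_i$ being globally generated makes each $\deg(c_1(L_i|_C))$ nonnegative, so the minimum $\epsilon$ can be factored out against $H\cdot C$. The only point worth a line of justification is that $H$ is indeed ample on $X$, which follows from the convention that $H$ is ample on any subvariety of $\PP^{N_1}\times\cdots\times\PP^{N_m}$ restricted appropriately, combined with $X \subset A \subset \PP^{N_1} \times \cdots \times \PP^{N_m}$.
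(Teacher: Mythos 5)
Your proposal is correct and follows essentially the same route as the paper: the paper's proof consists precisely of substituting the bound $s_i\le D-k-1$ from inequality (\ref{ineq:s_up_bound}) into inequality (\ref{ineq:2g-2>=degN}) and reading off the positive constant. Your additional bookkeeping (extracting $\epsilon=\min_i\epsilon_i$ and noting $\deg(c_1(L_i|_C))\ge0$) is exactly the step the paper leaves implicit.
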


Now, we consider the situation where we allow equalities in the bound, i.e., $d_i\ge D-k-a_i-1$ for all $i$. Fix a smooth curve $C$ of genus $g$ and type $(s_1,\ldots,s_m)$. Suppose $s_i\le D-k-2$ for all $i$. Then from the argument above we have
\begin{align*}
    2g-2\ge\sum_{i=1}^m(a_i+d_i-s_i)\deg(c_1(L_i|_C))\ge\sum_{i=1}^m\deg(c_1(L_i|_C)).
\end{align*}

Since $s_1+\cdots+s_m\le D-k-1$, the remaining case is $s_r=D-k-1$ for some $r$, and $s_i=0$ for all $i\ne r$. For a curve of this type, we adapt the scroll construction from \cite{Yeong:HypsrfProdProjSpace} as follows.

\subsection{Scroll construction}

\setcounter{equation}{5}

Let $X$ and $C$ be as above. By definition of $(s_1,\ldots,s_m)$ we have the surjection
\begin{align*}
    \gamma:M_{L_r}|_C^{\oplus D-k-1}\twoheadrightarrow N_{C/X}
\end{align*}
with torsion free image. Due to \cite{Clemens03,Clemens-Ran04,Clemens-Ran_Erratum}, we know that $\gamma$ induces a map $$M_{L_r}|_C\twoheadrightarrow N_{C/X}/(\gamma(M_{L_r}^{D-k-2}))$$ with rank one image $Q$. The exact sequence
\begin{align*}
    0\to M_{L_r}|_C^{\oplus D-k-2}\xrightarrow{\gamma}N_{C/X}\to Q\to0
\end{align*}
gives us
\begin{align}\label{N=Q}
    c_1(N_{C/X})=-(D-k-2)c_1(L_r|_C)+c_1(Q).
\end{align}

Following the scroll construction from \cite{Yeong:HypsrfProdProjSpace}, we use the diagram
    \begin{align*}\label{}
    \xymatrixcolsep{7pc}
    \xymatrix@R3pc@!0{
            0\ar[rd]&0\ar[d]&&&\\
            &K\ar[d]\ar[rd]&&0&\\
            0\ar[r]&M_{L_r}|_C\ar[r]\ar[d]&H^0(L_r)\otimes\OO_C\ar[r]\ar[rd]&L_r|_C\ar[r]\ar[u]&0\\
            &Q\ar[d]&&Q'\ar[rd]\ar[u]&\\
        &0&&&0 
    }
    \end{align*}
to construct an irreducible two dimensional $\PP^{N_r}$-scroll $S$ over $C$ induced by the rank $2$ bundle $Q'$. That is, $S$ is a one parameter union of lines of class $$\HH^{\NN-\ee_r}=H_1^{N_1}\cdots H_{r-1}^{N_{r-1}}H_r^{{N_r-1}}H_{r+1}^{N_{r+1}}\cdots H_m^{N_m}$$ in $\chow(\PP^{N_1}\times\cdots\times\PP^{N_r})$. Furthermore, the above diagram yields
\begin{align*}
    [S] = (\deg(c_1(L_r|_C)+c_1(Q)))\HH^{\NN-2\ee_r}+\sum_{i\ne r}(\deg(c_1(L_i|_C)))\HH^{\NN-\ee_r-\ee_i}
\end{align*} in the same Chow ring.

To bound $\deg c_1(Q)$ by intersecting $S$ and $X$, we introduce the join construction to increase the dimension of $S$ as follows.

\begin{definition}
    For any subvariety $Z\subset\PP^{N_1}\times\cdots\times\PP^{N_m}$, the $\PP^{N_i}$-cone of $Z$, denoted by $\Cone_i(Z)$, is defined to be the top dimension part of the closure of
    $$\pi_i^{-1}\pi_iZ,$$
    where $\pi_i:\PP^{N_1}\times\cdots\times\PP^{N_m}\to\PP^{N_1}\times\PP^{N_{i-1}}\times\PP^{N_i-1}\times\PP^{N_{i+1}}\times\cdots\times\PP^{N_m}$ is the projection from $\PP^{N_1}\times\PP^{N_{i-1}}\times\{p\}\times\PP^{N_{i+1}}\times\cdots\times\PP^{N_m}$ for some generic $p\in\PP^{N_i}$.
\end{definition}

\begin{definition}
    Given an $m$-tuple $ \blambda=(\lambda_1,\ldots,\lambda_m)$ of nonnegative integers and a subvariety $ Z\subset\PP^{N_1}\times\cdots\times\PP^{N_m}$, we define the $ \blambda$-join of $Z$ to be
    $$\Join_ \blambda Z:=\Cone_1^{\lambda_1}\cdots\Cone_m^{\lambda_m}(Z),$$
    where $\Cone_i^{\lambda_i}$ means repeating the cone operator $\Cone_i$ $\lambda_i$ times.
\end{definition}

While the cone $\Cone_i(Z)$ and $ \blambda$-join $\Join_ \blambda Z$ of a variety $Z$ are not unique, their classes in the Chow ring $\chow(\PP^{N_1}\times\cdots\times\PP^{N_m})$ are uniquely given by the following formula.

\begin{proposition}
    Let $Z\subset \PP^{N_1}\times\cdots\times \PP^{N_m}$ be a subvariety of class $\sum_{\balpha}b_\balpha\HH^\balpha$, and let $\blambda$ be an $m$-tuple of nonnegative integers. Then
    $$[\Join_\blambda Z]=\sum_\balpha b_\balpha\HH^{\balpha-\blambda},$$
    where $\HH^{(\beta_1,\ldots,\beta_m)}:=0$ if $\beta_i<0$ for any $i$.
\end{proposition}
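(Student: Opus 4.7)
The plan is to proceed by induction on $|\blambda|$. The base case $|\blambda|=0$ is immediate. For the inductive step, write $\blambda = \blambda' + \ee_i$ for some $i$ and use the identity $\Join_\blambda Z = \Cone_i(\Join_{\blambda'} Z)$; by the inductive hypothesis applied to $\Join_{\blambda'} Z$, the problem reduces to the single-cone case: for any subvariety $Z$ of class $\sum_\balpha b_\balpha \HH^\balpha$, show
\begin{equation*}
    [\Cone_i(Z)] \;=\; \sum_\balpha b_\balpha\, \HH^{\balpha-\ee_i},
\end{equation*}
where terms with a negative exponent are interpreted as zero.

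Since $\{\HH^\balpha\}$ is a basis of $\chow(\PP^{N_1}\times\cdots\times\PP^{N_m})$ under a perfect intersection pairing, it suffices to verify the identity above after intersecting with an arbitrary $\HH^{\boldsymbol{\gamma}}$ of complementary dimension. I would aim to show
\begin{equation*}
    [\Cone_i(Z)]\cdot \HH^{\boldsymbol{\gamma}} \;=\; [Z]\cdot \HH^{\boldsymbol{\gamma}-\ee_i},
\end{equation*}
with the right-hand side understood to be zero when $\gamma_i=0$.

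To verify this, I would represent $\HH^{\boldsymbol{\gamma}}$ by a generic product $L = L_1\times\cdots\times L_m$ of linear subspaces $L_j\subset\PP^{N_j}$ of codimension $\gamma_j$, and choose a generic projection point $p\in\PP^{N_i}$ (so $p\notin L_i$ when $\gamma_i\ge1$). The key geometric observation is a bijection
\begin{equation*}
    \Cone_i(Z)\cap L \;\longleftrightarrow\; Z \cap \bigl(L_1\times\cdots\times L_{i-1}\times\Cone_p(L_i)\times L_{i+1}\times\cdots\times L_m\bigr),
\end{equation*}
valid when $\gamma_i\ge1$, where $\Cone_p(L_i)\subset\PP^{N_i}$ denotes the projective span of $p$ and $L_i$, a linear subspace of codimension $\gamma_i-1$: a point $(q_1,\ldots,q_m)\in\Cone_i(Z)\cap L$ corresponds to the unique $z\in Z$ with $z_j=q_j$ for $j\neq i$ and $z_i\in\overline{pq_i}$, while conversely each $z$ on the right-hand side determines $q_i = \overline{pz_i}\cap L_i$ uniquely. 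The cardinality of the right-hand side is $[Z]\cdot\HH^{\boldsymbol{\gamma}-\ee_i}$, giving the identity. When $\gamma_i=0$, the intersection $\Cone_i(Z)\cap L$ is empty for generic choices, since the slice of $Z$ by the remaining factors is forced to be negative-dimensional; this matches the convention $\HH^{\boldsymbol{\gamma}-\ee_i}=0$.

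The main obstacle will be verifying that the bijection is well-defined and that no excess-intersection contributions or multiplicities appear for generic choices of $L$ and $p$. If the direct set-theoretic argument becomes delicate, a more robust alternative is to work on the blow-up $\tilde Y = \Bl_W Y$ of the indeterminacy locus $W = \PP^{N_1}\times\cdots\times\{p\}\times\cdots\times\PP^{N_m}$, where $\pi_i$ resolves to a $\PP^1$-bundle-type morphism $\psi:\tilde Y\to \PP^{N_1}\times\cdots\times\PP^{N_i-1}\times\cdots\times\PP^{N_m}$, and to use the projective-bundle formula together with the Chow ring of a blow-up of a smooth center to compute $[\Cone_i(Z)]$ in terms of $[Z]$.
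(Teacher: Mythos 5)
The paper states this proposition without proof, so there is no argument of the authors to compare yours against; I can only assess your sketch on its own terms. The overall strategy --- induction on $|\blambda|$ to reduce to a single cone, then testing against complementary classes $\HH^{\boldsymbol{\gamma}}$ by intersecting with a generic product of linear spaces and trading $L_i$ for its span $\Cone_p(L_i)$ with $p$ --- is the right one, and the transversality worries you flag at the end are the minor part: they are handled by Kleiman's theorem on the homogeneous space $\PP^{N_1}\times\cdots\times\PP^{N_m}$ together with a careful order of choices (first pick a generic codimension-$(\gamma_i-1)$ subspace $M_i\subset\PP^{N_i}$, then a generic $p\in M_i$ and a generic hyperplane $L_i$ of $M_i$, so that $\Cone_p(L_i)=M_i$ is generic for $Z$ even though it is constrained to contain $p$).

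The genuine gap is the word ``unique'' in your correspondence. Given $q\in\Cone_i(Z)\cap L$, the points $z\in Z$ with $z_j=q_j$ for $j\ne i$ and $z_i\in\overline{pq_i}$ are exactly the points of $Z$ lying on the fiber of $\pi_i$ through $q$, and for general $q$ there are $\deg(\pi_i|_Z)$ of them; moreover, since $\Cone_p(L_i)$ is a cone with vertex $p$, it contains the entire line $\overline{pz_i}$ as soon as it contains $z_i$, so \emph{every} such $z$ lies in $Z\cap\bigl(L_1\times\cdots\times\Cone_p(L_i)\times\cdots\times L_m\bigr)$. Your map is therefore $\deg(\pi_i|_Z)$-to-one rather than a bijection, and the identity you are verifying acquires that degree as a factor. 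This is not a removable technicality but the actual content of the proposition: for $m=1$ and $Z$ a degree-$d$ hypersurface the formula would predict $d[\PP^N]$ while $\pi_i^{-1}\pi_iZ=\PP^N$ has class $[\PP^N]$, and for $Z=\{pt\}\times\PP^{N_2}$ with $i=2$ the cone does not even gain a dimension. So what must be isolated and proved is that, under hypotheses which the statement leaves implicit (and which must be checked to propagate through every stage of the induction, where the variety being coned changes), the projection of the $i$-th factor from a general point $p$ is birational from $Z$ onto its image; a secant-line incidence count is the standard way to supply this when $Z$ has small enough fibers over the remaining factors. Your blow-up fallback does not evade the issue either, since $\pi_i^*\pi_{i*}[Z]$ differs from the class of the reduced cone by the same factor $\deg(\pi_i|_Z)$.
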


Now, let $S$ be the scroll constructed above, then the above proposition gives us
\begin{align*}
    \Join_\blambda S=(\deg(c_1(L_r|_C)+c_1(Q)))\HH^{\NN-2\ee_r-\blambda}+\sum_{i\ne r}(\deg(c_1(L_i|_C)))\HH^{\NN-\ee_r-\ee_i-\blambda}.
\end{align*}

Suppose $|\blambda|=k-1$. Notice that since $S$ is irreducible, $C\subset(\Join_\blambda S)\cap X\subset A$. So by taking intersection product in $\chow(A)$, we have
\begin{align*}
    0\le\deg(((\Join_\blambda S) X-C)H_r)=\deg\!\left(d_{\blambda+\ee_r}(c_1(L_r|_C)+c_1(Q))+\sum_{i\ne r}d_{\blambda+\ee_i}c_1(L_i|_C)-c_1(L_r|_C)\right)\!.
\end{align*}
Assuming $d_{\blambda+\ee_r}\neq0$, we see that \begin{align*}
    \deg c_1(Q)\ge\deg\!\left(\left(\frac{1}{d_{\blambda+\ee_r}}-1\right)c_1(L_r|_C)-\sum_{i\ne r}\frac{d_{\blambda+\ee_i}}{d_{\blambda+\ee_r}}c_1(L_i|_C)\right)\!.
\end{align*}

Together with (\ref{ineq:2g-2>=degN_detailed}) and (\ref{N=Q}), we arrive at the following theorem.

\begin{theorem}\label{thm:alg_hyp_scroll_bound_general}
    Let $A$, $\E$, $X$, $d_\balpha$'s and $d_i$'s be as above, and $d_i\ge D-a_i-k-1$ for all $i$. Also suppose that for all $r=1,\ldots,m$, there exists an $m$-tuple $\blambda$ of nonnegative integers with $|\blambda|=k-1$ such that $d_{\blambda+\ee_r}\neq0$ and \begin{align*}
        a_i+d_i-\frac{d_{\blambda+\ee_i}}{d_{\blambda+\ee_r}}>0
    \end{align*} for all $i\ne r$. Then $X$ is algebraically hyperbolic.
\end{theorem}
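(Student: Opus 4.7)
The plan is to prove that every integral curve $C \subset X$ of geometric genus $g$ and some type $(s_1, \ldots, s_m)$ satisfies $2g - 2 \geq \epsilon \deg(C \cdot H)$ for a uniform $\epsilon > 0$, by splitting into two cases according to the type. The machinery needed (the preliminary bound (\ref{ineq:2g-2>=degN}), the scroll $S$, the join $\Join_\blambda S$, and the intersection-theoretic bound on $\deg c_1(Q)$) has all been set up in the discussion preceding the statement; the task is to assemble these pieces into the claimed genus estimate.

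In the easy case, $s_i \leq D - k - 2$ for every $i$, so the hypothesis $a_i + d_i \geq D - k - 1$ applied termwise to (\ref{ineq:2g-2>=degN}) gives
\begin{align*}
    2g - 2 \geq \sum_{i=1}^m (a_i + d_i - s_i) \deg(c_1(L_i|_C)) \geq \sum_{i=1}^m \deg(c_1(L_i|_C)) = \deg(C \cdot H),
\end{align*}
so $\epsilon = 1$ works on such curves. Since $\sum s_i \leq D - k - 1$ by (\ref{ineq:s_up_bound}), the only remaining possibility is $s_r = D - k - 1$ and $s_i = 0$ for $i \neq r$, for some $r \in \{1, \ldots, m\}$. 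For this case I would invoke the scroll construction.

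In the remaining case, pick the $\blambda$ provided by the hypothesis for this particular $r$, so that $|\blambda| = k - 1$ and $d_{\blambda + \ee_r} > 0$. Since $S$ is an irreducible two-dimensional scroll, $\Join_\blambda S$ is an irreducible $(k+1)$-dimensional subvariety of $A$ containing $C$, so the class $(\Join_\blambda S) \cdot X - C$ in $\chow(A)$ is represented by an effective curve. Pairing with $H_r$ yields the nonnegativity inequality set up above, which after dividing by $d_{\blambda + \ee_r}$ gives the lower bound on $\deg c_1(Q)$. Substituting that, together with (\ref{N=Q}), into (\ref{ineq:2g-2>=degN_detailed}) makes the coefficient of $\deg c_1(L_r|_C)$ equal to
\begin{align*}
    (a_r + d_r - (D - k - 1)) + \tfrac{1}{d_{\blambda + \ee_r}},
\end{align*}
which is strictly positive by the hypothesis on $a_r + d_r$ together with $d_{\blambda + \ee_r} \geq 1$; meanwhile, the coefficient of $\deg c_1(L_i|_C)$ for $i \neq r$ is $a_i + d_i - d_{\blambda + \ee_i}/d_{\blambda + \ee_r} > 0$ by hypothesis. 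Taking the minimum of the finitely many positive coefficients across all $r$ and their chosen $\blambda$'s produces a uniform $\epsilon > 0$ valid in both cases.

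The main obstacle I anticipate is verifying the geometric inputs underlying the intersection calculation in the remaining case: namely that $\Join_\blambda S$ has the expected dimension $k+1$ and carries the class given by the join proposition, and that $C$ appears with positive multiplicity in $(\Join_\blambda S) \cap X$ so the residual class is effective. The first follows from the generic choice of projection centers in each $\Cone_i$, and the second from the inclusion $C \subset S \subset \Join_\blambda S$ combined with $C \subset X$. Once these are in place, the genus estimate, and hence algebraic hyperbolicity of $X$, follows at once.
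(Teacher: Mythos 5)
Your proposal is correct and follows essentially the same route as the paper: the same case split on the type $(s_1,\ldots,s_m)$, the same scroll-plus-join intersection with $H_r$ to bound $\deg c_1(Q)$ from below, and the same resulting coefficients $(a_r+d_r-(D-k-1))+1/d_{\blambda+\ee_r}$ and $a_i+d_i-d_{\blambda+\ee_i}/d_{\blambda+\ee_r}$, all strictly positive under the hypotheses. The geometric inputs you flag as potential obstacles (the class of $\Join_\blambda S$ and the effectivity of $(\Join_\blambda S)\cdot X - C$) are exactly what the paper's join proposition and the irreducibility of $S$ supply.
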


\subsection{Non-hyperbolicity in low degrees}
On the other hand, we have the following upper bound for non-hyperbolicity. We do not assume that $\E$ is section-dominated by $L_1,\ldots,L_m$ for this result. Also, $k$ may be equal to $D-1$.

\begin{theorem}\label{thm:non_alg_hyp_bound_general}
	Let $A$, $\E$, $X$ and $d_i$'s be as above, with the exception that $\E$ need not be section-dominated by $L_1,\ldots,L_m$. Suppose that \begin{align*}
		d_i\le D-k-a_i-3
	\end{align*} for some $1\leq i\leq m$. Then $X$ is not algebraically hyperbolic.
\end{theorem}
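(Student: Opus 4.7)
The plan is to produce a rational curve in the very general vanishing locus $X_s=(s=0)$ via an incidence-variety dimension count. Any rational curve $L\subset X_s$ has $2g(L)-2=-2<\epsilon\deg(HL)$ for every $\epsilon>0$, so its presence immediately violates (\ref{ineq:alg_hyp_definition}) and forbids algebraic hyperbolicity.

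Fix the index $i$ with $d_i\le D-k-a_i-3$, and consider the numerical curve class $\beta$ on $A$ determined by $H_i\cdot\beta=1$ and $H_j\cdot\beta=0$ for $j\ne i$. Let $\mathcal{M}$ denote a component of the Hilbert scheme of $A$ parametrizing smooth rational curves of class $\beta$. Since $A$ is homogeneous, $T_A$ is globally generated, so any such curve $L$ is free; and from the embedding $A\hookrightarrow\prod_j\PP^{N_j}$ together with the $G$-action, lines in $\pr_i(A)\subset\PP^{N_i}$ lift to such $L$ in $A$. Standard deformation theory then yields
\[
\dim\mathcal{M}\,=\,-K_A\cdot\beta+D-3\,=\,D-a_i-3,
\]
where the $-3$ accounts for $\Aut(\PP^1)=\PGL_2$. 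For a general $L\in\mathcal{M}$, the globally generated rank-$k$ bundle $\E|_L$ on $L\cong\PP^1$ splits as $\bigoplus_{j=1}^{k}\OO(e_j)$ with $e_j\ge0$ and $\sum_j e_j=c_1(\E)\cdot\beta=d_i$, so $h^0(\E|_L)=d_i+k$.

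Form the incidence variety
\[
I=\{(L,s)\in\mathcal{M}\times H^0(\E):s|_L=0\},
\]
whose fiber over a general $L$ is the kernel of the restriction $r_L\colon H^0(\E)\to H^0(\E|_L)$, of codimension at most $d_i+k$. Hence
\[
\dim I\,\ge\,(D-a_i-3)+h^0(\E)-(d_i+k)\,\ge\,h^0(\E),
\]
by the hypothesis. A standard dominance argument—leveraging the $G$-action on $\mathcal{M}$ to ensure that the family of kernels $\{\ker(r_L)\}$ is not trapped in a proper linear subspace of $H^0(\E)$—then shows that $I\to H^0(\E)$ is dominant. Consequently, for very general $s\in H^0(\E)$, some $L\in\mathcal{M}$ is contained in $X_s$, yielding the required rational curve in $X$ and precluding algebraic hyperbolicity.

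\textbf{Main obstacle.} The delicate step is passing from the dimension inequality $\dim I\ge\dim H^0(\E)$ to the dominance of the projection $I\to H^0(\E)$: a priori the subspaces $\ker(r_L)\subset H^0(\E)$ as $L$ varies over $\mathcal{M}$ could all lie inside a proper hyperplane, in which case very general $s$ would not lie in the image. Ruling this out uses the homogeneity of $A$ (so that $\mathcal{M}$ is $G$-homogeneous on an open subset and its universal line sweeps out $A$), together with freeness of a general $L\in\mathcal{M}$. A subsidiary point—existence of class-$\beta$ rational curves in $A$—similarly relies on the structure of the homogeneous embedding in $\prod_j\PP^{N_j}$ and could require care if $\pr_i(A)$ fails to contain lines.
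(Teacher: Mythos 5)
Your proposal is correct and takes essentially the same route as the paper: the paper also forms the incidence variety of $\PP^{N_i}$-lines (curves with $H_i\cdot\beta=1$, $H_j\cdot\beta=0$ for $j\ne i$) inside the zero loci, uses $\dim F_i(A)=D-a_i-3$ and the fiber dimension $h^0(\E\otimes\cI_\ell)\ge h^0(\E)-d_i-k$, and concludes dominance from $D-a_i-3-d_i-k\ge0$. The dominance step you flag as the main obstacle is treated just as tersely in the paper (it is asserted directly from the dimension count), so you are not missing anything the paper supplies.
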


\begin{proof}
	Fix $1\leq i\leq m$, we show that the inequality in the theorem implies that $X$ contains lines. Let \begin{align*}
		F_i(A)\subset\PP^{N_1}\times\cdots\times\PP^{N_{i-1}}\times\Gr(1,N_i)\times\PP^{N_{i+1}}\times\cdots\times\PP^{N_m}
	\end{align*} be the Fano scheme of all $\PP^{N_i}$-lines in $A$. Consider the incidence variety
	\begin{align*}
		\Theta:=\{(\ell,t):\ell\subset X_t\}\subset F_i(A)\times\cV,
	\end{align*}
	of $\PP^{N_i}$-lines contained in the zero locus $X_t$. 
	Let $\pi_{F_i}$ and $\pi_\cV$ be the projection from $\Theta$ to $F_i(A)$ and $\cV$ respectively. Our goal is to determine whether $\pi_\cV$ is surjective.
	
	Since $\E$ is globally generated, $h^1(\E|_\ell)=0$ for any $\PP^{N_i}$-line $\ell\subset A$ as $\cE|_{\ell}$ splits into a direct sum of line bundles of the form $\OO_{\ell}(n)$ with $n\ge0$. Therefore, from the exact sequence
	\begin{align*}
		0\to\E\otimes\cI_\ell\to\E\to\E_\ell\to0
	\end{align*}
	on $A$, we have
	\begin{align*}
		\dim\pi_{F_i}^{-1}(\ell)=&\;h^0(\E\otimes\cI_\ell)\\
		=&\;h^0(\E)-h^0(\E_\ell)+h^1(\E\otimes\cI_\ell)-h^1(\E)\\
		=&\;h^0(\E)-d_i-k+h^1(\E\otimes\cI_\ell)-h^1(\E)
	\end{align*}
	for any $\PP^{N_i}$-line $\ell$ in $A$. In particular, since the long exact sequence ends at $H^1(\E\otimes\cI_\ell)\to H^1(\E)\to0$, we have $h^1(\E\otimes\cI_\ell)-h^1(\E)\ge0$. From \cite[Lemma 3.3]{Mioranci:HypsrfHomogenVars_v3}, we have $\dim F_i(A)=D-a_i-3$. Thus,
	\begin{align*}
		\dim\Theta=\;&\dim F_i(A)+\dim\pi_{F_i}^{-1}(\ell)\;\\
		=\;&D-a_i-3+h^0(\E)-d_i-k+h^1(\E\otimes\cI_\ell)-h^1(\E)\\
		\ge\;&D-a_i-3+h^0(\E)-d_i-k.
	\end{align*}
	Therefore, if \begin{align*}
		D-a_i-3-d_i-k\ge0,
	\end{align*} then \begin{align*}
		\dim\Theta-\dim\cV\ge\dim\Theta-h^0(\E)\ge D-a_i-3-d_i-k\ge0,
	\end{align*} whence $\pi_\cV$ is surjective.
\end{proof}

\section{Complete intersections}\label{sec:comp_int}

In this section, we apply our results to the case where $\E$ splits as a direct sum of line bundles, i.e., 
\begin{align*}
    \E:=\bigoplus_{j=1}^k\bigotimes_{i=1}^mL_i^{\otimes d_{i,j}}
\end{align*} for some positive integers $d_{i,j}$. In this case, $X$ is a complete intersection subvariety of $A$. Note that if $A=\PP^{N_1}\times\cdots\times\PP^{N_m}$, then it is clear from Example \ref{eg:sect_dom_basic_example} that $\E$ is section-dominated by $L_1,\ldots,L_m$.

We recall our notation \begin{align*}
    d_1H_1+\cdots+d_mH_m=c_1(\E)=\left(\sum_{j=1}^k d_{1,j}\right)\!H_1+\cdots+\left(\sum_{j=1}^k d_{m,j}\right)\!H_m
\end{align*} and \begin{align*}
    \sum_{|\balpha|=k}d_\balpha\HH^\balpha=c_k(\E)=[X]=\prod_{j=1}^k\left(\sum_{i=1}^m d_{i,j}H_i\right).
\end{align*}

By Theorems \ref{thm:alg_hyp_initial_bound_general} and \ref{thm:alg_hyp_scroll_bound_general}, we get the following result.

\begin{theorem}\label{thm:alg_hyp_complete}
    Let notation be as just above. Suppose that \begin{align*}
        \sum_{j=1}^k d_{i,j}\ge D-k-a_i-1
    \end{align*} for all $i=1,\ldots,m$, and that at least one of the following is true: \begin{enumerate}
        \item The above inequality is strict for all $i$.
        \item\label{thm:alg_hyp_scroll_complete} For all $r=1,\ldots,m$, there exists an $m$-tuple $\blambda$ of nonnegative integers with $|\blambda|=k-1$ such that \begin{align*}
            a_i+\sum_{j=1}^k d_{i,j}-\frac{d_{\blambda+\ee_i}}{d_{\blambda+\ee_r}}>0
        \end{align*} for all $i\ne r$.
    \end{enumerate} Then $X$ is algebraically hyperbolic.
\end{theorem}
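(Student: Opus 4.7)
The plan is to deduce Theorem \ref{thm:alg_hyp_complete} as a direct specialization of Theorems \ref{thm:alg_hyp_initial_bound_general} and \ref{thm:alg_hyp_scroll_bound_general}. The only real content is to translate the split-bundle data $(d_{i,j})$ into the Chern-class data $(d_i, d_\balpha)$ appearing in the hypotheses of those theorems.

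First I would compute the Chern classes of $\E = \bigoplus_{j=1}^k \LL_j$, where $\LL_j := \bigotimes_{i=1}^m L_i^{\otimes d_{i,j}}$. Since each $\LL_j$ is a line bundle with $c_1(\LL_j) = \sum_{i=1}^m d_{i,j} H_i$, multiplicativity of the total Chern class under direct sums gives
\begin{align*}
c(\E) \;=\; \prod_{j=1}^k \bigl(1 + c_1(\LL_j)\bigr),
\end{align*}
so that
\begin{align*}
c_1(\E) \;=\; \sum_{i=1}^m \Bigl(\sum_{j=1}^k d_{i,j}\Bigr) H_i \qquad\text{and}\qquad c_k(\E) \;=\; \prod_{j=1}^k \Bigl(\sum_{i=1}^m d_{i,j} H_i\Bigr).
\end{align*}
In the notation of \S\ref{subsec:setup}, this says $d_i = \sum_{j=1}^k d_{i,j}$, while the coefficients $d_\balpha$ in the expansion $c_k(\E) = \sum_{|\balpha|=k} d_\balpha \HH^\balpha$ are precisely those obtained by multiplying out the product on the right-hand side.

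Next I would split into the two cases of the theorem. In case (1), the strict inequalities $\sum_j d_{i,j} > D - k - a_i - 1$ become, after the substitution above, exactly the hypothesis $d_i > D - k - a_i - 1$ of Theorem \ref{thm:alg_hyp_initial_bound_general}, which concludes that $X$ is algebraically hyperbolic. In case (2), the non-strict inequality $\sum_j d_{i,j} \ge D - k - a_i - 1$ becomes $d_i \ge D - k - a_i - 1$, and the scroll-bound inequality $a_i + \sum_j d_{i,j} - d_{\blambda+\ee_i}/d_{\blambda+\ee_r} > 0$ coincides term-for-term with the hypothesis of Theorem \ref{thm:alg_hyp_scroll_bound_general}. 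Invoking that theorem finishes the argument.

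There is essentially no obstacle here, since the proof is a bookkeeping translation. The only minor subtlety worth flagging is the section-dominating hypothesis demanded by the cited theorems: it is inherited from the blanket assumption in \S\ref{subsec:setup}, and in the special case $A = \PP^{N_1}\times\cdots\times\PP^{N_m}$ it is automatic by Example \ref{eg:sect_dom_basic_example}. One should also note that $d_{\blambda+\ee_r}$ is automatically nonzero for every $\blambda$ with $|\blambda|=k-1$ appearing in the product expansion of $c_k(\E)$ (because all $d_{i,j}$ are positive integers), so the existence clause in case (2) of Theorem \ref{thm:alg_hyp_scroll_bound_general} is not an obstruction.
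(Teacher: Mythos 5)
Your proposal is correct and matches the paper's own (very brief) argument: the paper likewise records $d_i=\sum_j d_{i,j}$ and reads off the $d_\balpha$ from the product expansion of $c_k(\E)$, then cites Theorems \ref{thm:alg_hyp_initial_bound_general} and \ref{thm:alg_hyp_scroll_bound_general} directly. Your added remark that positivity of the $d_{i,j}$ forces $d_{\blambda+\ee_r}\neq0$ is a worthwhile observation, since the paper silently drops that hypothesis when passing from Theorem \ref{thm:alg_hyp_scroll_bound_general} to case (2) here.
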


In particular, by choosing $\blambda=(k-1)\ee_r$, we obtain the following special case for complete intersections with uniform degrees. We suppose that for each $j$, $d_{i,j}$ is independent of $i$. Then $d:=d_1=\cdots=d_m$, and $c_1(\E)=dH_1+\cdots+dH_m$.

\begin{corollary}
    Let $d$ be as above. Suppose that $d\ge D-k-a_i-1$ for all $i$, and that $(D-1)/2>k$. Then $X$ is algebraically hyperbolic.
\end{corollary}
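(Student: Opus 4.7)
The plan is to apply Theorem~\ref{thm:alg_hyp_complete} directly. Under the uniform-degree assumption, writing $e_j := d_{i,j}$ (which is independent of $i$ by hypothesis), we have $\sum_{j} d_{i,j} = \sum_{j} e_j = d$, so the first inequality $\sum_j d_{i,j} \ge D - k - a_i - 1$ of Theorem~\ref{thm:alg_hyp_complete} is exactly the hypothesis $d \ge D - k - a_i - 1$. If this inequality happens to be strict for every $i$, part (1) of that theorem closes the proof immediately, so I may assume $d = D - k - a_{i_0} - 1$ for at least one index $i_0$, and work toward verifying condition (2).

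For each $r$ I would take the uniform ansatz $\blambda := (k-1)\ee_r$, as suggested just before the corollary. The key computation is that under the uniform-degree hypothesis we can factor
\[
c_k(\E) = \prod_{j=1}^{k}\Bigl(e_j\sum_{i=1}^{m} H_i\Bigr) = E\,(H_1 + \cdots + H_m)^k,
\]
where $E := \prod_{j=1}^{k} e_j > 0$. A multinomial expansion then immediately yields $d_{k\ee_r} = E$ and $d_{(k-1)\ee_r + \ee_i} = kE$ for $i \ne r$, so the ratio appearing in Theorem~\ref{thm:alg_hyp_complete}(2) collapses to $d_{\blambda + \ee_i}/d_{\blambda + \ee_r} = k$, and the nonvanishing $d_{\blambda + \ee_r} \ne 0$ is automatic.

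It then remains to verify that $a_i + d - k > 0$ for every $i \ne r$. Combining with $d \ge D - k - a_i - 1$ gives
\[
a_i + d - k \ge D - 2k - 1,
\]
which is strictly positive exactly when $D - 1 > 2k$, i.e., precisely under the hypothesis $(D-1)/2 > k$. This verifies condition (2) of Theorem~\ref{thm:alg_hyp_complete} for every $r$, and the corollary follows.

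The argument has no real obstacle; it is essentially a bookkeeping exercise. The uniform-degree structure is what collapses the Chern-class coefficient ratio to the clean value $k$, and the only role played by the hypothesis $(D-1)/2 > k$ is to upgrade the borderline equality $d = D - k - a_i - 1$ (the case where part (1) of Theorem~\ref{thm:alg_hyp_complete} fails) into the strict inequality $a_i + d > k$ needed for part (2).
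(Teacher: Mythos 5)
Your proposal is correct and follows exactly the route the paper indicates: choosing $\blambda=(k-1)\ee_r$, using the factorization $c_k(\E)=E(H_1+\cdots+H_m)^k$ to get $d_{\blambda+\ee_i}/d_{\blambda+\ee_r}=k$, and reducing condition (2) of Theorem \ref{thm:alg_hyp_complete} to $a_i+d-k\ge D-2k-1>0$. The bookkeeping (multinomial coefficients, positivity of $E$, and the final inequality) all checks out.
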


On the other hand, due to Theorem \ref{thm:non_alg_hyp_bound_general}, we have the following bound for $X$ to fail to be algebraically hyperbolic.

\begin{theorem}\label{thm:non_alg_hyp_ci}
    Let notation be as above. Suppose that \begin{align*}
        \sum_{j=1}^k d_{i,j}\le D-k-a_i-3
    \end{align*} for some $1\leq i\leq m$. Then $X$ is not algebraically hyperbolic.
\end{theorem}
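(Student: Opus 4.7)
The plan is to deduce Theorem \ref{thm:non_alg_hyp_ci} as a direct specialization of Theorem \ref{thm:non_alg_hyp_bound_general}, which was proved earlier without requiring the section-dominating hypothesis and which even permits $k = D - 1$. The first step is purely bookkeeping: under the complete intersection setup
$$\E = \bigoplus_{j=1}^k\bigotimes_{i=1}^m L_i^{\otimes d_{i,j}},$$
the first Chern class is
$$c_1(\E) = \sum_{j=1}^k\sum_{i=1}^m d_{i,j} H_i = \sum_{i=1}^m\!\left(\sum_{j=1}^k d_{i,j}\right)\! H_i,$$
so that in our notation $d_i = \sum_{j=1}^k d_{i,j}$. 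Hence the hypothesis $\sum_j d_{i,j} \le D - k - a_i - 3$ translates verbatim to the hypothesis $d_i \le D - k - a_i - 3$ of Theorem \ref{thm:non_alg_hyp_bound_general}.

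Next, I would verify the standing hypotheses needed by Theorem \ref{thm:non_alg_hyp_bound_general}. The only one beyond the setup in \S\ref{subsec:setup} that is actually invoked in its proof is global generation of $\E$, used to guarantee that $\cE|_\ell$ splits as a sum of $\OO_\ell(n)$ with $n \ge 0$ for every $\PP^{N_i}$-line $\ell$, and hence that $h^1(\E|_\ell) = 0$. Since each $d_{i,j}$ is a positive integer and each $L_i$ is globally generated on $A$, every summand $\bigotimes_i L_i^{\otimes d_{i,j}}$ is globally generated, and thus so is $\E$. No section-dominating assumption is needed.

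Applying Theorem \ref{thm:non_alg_hyp_bound_general} then immediately yields that $X$ is not algebraically hyperbolic. For orientation on where the content actually sits: the earlier proof ran an incidence variety argument on $\Theta = \{(\ell, t) : \ell \subset X_t\} \subset F_i(A) \times \cV$, combining the formula $\dim F_i(A) = D - a_i - 3$ from \cite{Mioranci:HypsrfHomogenVars_v3} with the dimension count $\dim \pi_{F_i}^{-1}(\ell) = h^0(\E) - d_i - k + h^1(\E \otimes \cI_\ell) - h^1(\E)$ obtained from the ideal sequence of $\ell$. Under our numerical hypothesis this forces $\dim \Theta \ge \dim \cV$, so the projection $\pi_\cV$ is surjective and the very general $X_t$ contains a $\PP^{N_i}$-line, which manifestly violates the inequality $2g(C) - 2 \ge \epsilon \deg(H \cdot C)$ defining algebraic hyperbolicity. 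Since Theorem \ref{thm:non_alg_hyp_bound_general} already encapsulates all of this, no genuinely new obstacle appears in the complete intersection case; the entire ``proof'' is the translation $d_i = \sum_j d_{i,j}$ together with the observation that the global generation hypothesis is automatic for direct sums of positive tensor products of the $L_i$.
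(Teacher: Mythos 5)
Your proof is correct and matches the paper's own treatment: the paper likewise obtains Theorem \ref{thm:non_alg_hyp_ci} as an immediate specialization of Theorem \ref{thm:non_alg_hyp_bound_general} via the identification $d_i=\sum_{j=1}^k d_{i,j}$ from $c_1(\E)$. Your added check that global generation of $\E$ is automatic for a direct sum of positive tensor powers of the $L_i$ is a harmless (and correct) bit of extra diligence.
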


We would like to remind the reader that for the above theorem, we need not assume that $L_1,\ldots,L_m$ form a section-dominating collection for $\E$, and also that $k$ may be equal to $D-1$.

Applying Theorems \ref{thm:alg_hyp_complete} and \ref{thm:non_alg_hyp_ci}, we obtain the following corollary for $A=\PP^n$.

\begin{corollary}\label{cor:ci_Pn}
    If $X=X_1\cap\cdots\cap X_k$ is a very general complete intersection in $\PP^n$ for some positive integers $n$ and $k$ with $k\leq n-2$, where $X_j$ is a very general hypersurface of degree $d_j$ for each $j=1,\ldots,k$, then $X$ is algebraically hyperbolic if $\sum_j d_j\geq2n-k$, and $X$ is not algebraically hyperbolic if $\sum_j d_j\leq2n-k-2$.
\end{corollary}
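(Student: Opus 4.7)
The plan is to observe that this corollary is essentially a direct specialization of Theorems \ref{thm:alg_hyp_complete} and \ref{thm:non_alg_hyp_ci} to the one-factor case $A = \PP^n$. I would first fix the parameters of the setup of \S\ref{subsec:setup}: take $m = 1$, $D = n$, and $a_1 = -(n+1)$ (so that $K_{\PP^n} = -(n+1)H$). The vector bundle is $\E = \bigoplus_{j=1}^{k}\OO_{\PP^n}(d_j)$, which is globally generated (each $d_j \ge 1$) and, by Example \ref{eg:sect_dom_basic_example}, is section-dominated by $L_1 = \OO_{\PP^n}(1)$. The zero locus $X$ of a very general section is smooth, irreducible, of codimension $k$ by Bertini, and the condition $k \le n-2 = D-2$ matches the hypothesis of the setup. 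With the single-index notation collapsing $\balpha$ to a nonnegative integer, one has $d_1 = \sum_j d_j$ and $d_{(k)} = \prod_j d_j$.

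For the hyperbolicity direction, suppose $\sum_j d_j \ge 2n - k$. The condition $\sum_j d_{i,j} \ge D - k - a_i - 1$ in Theorem \ref{thm:alg_hyp_complete} becomes $\sum_j d_j \ge n - k + (n+1) - 1 = 2n - k$, which holds. If the inequality is strict, part (1) of Theorem \ref{thm:alg_hyp_complete} immediately applies. If equality holds, I would invoke part (2) with $r = 1$ and $\blambda = (k-1)$: then $\blambda + \ee_r = (k)$ and $d_{(k)} = \prod_j d_j \ne 0$, while the remaining conditions are vacuous since $m = 1$ leaves no index $i \ne r$ to check. Hence $X$ is algebraically hyperbolic in either subcase.

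For the non-hyperbolicity direction, suppose $\sum_j d_j \le 2n - k - 2$. Then the hypothesis $\sum_j d_{i,j} \le D - k - a_i - 3$ of Theorem \ref{thm:non_alg_hyp_ci} becomes $\sum_j d_j \le n - k + (n+1) - 3 = 2n - k - 2$, which is exactly our assumption. Applying the theorem (whose proof only used global generation of $\E$ and does not require section-domination) shows that $X$ contains $\PP^n$-lines and is therefore not algebraically hyperbolic.

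There is no real obstacle here: the only point worth pausing over is the boundary case $\sum_j d_j = 2n - k$, where one must check that condition (2) of Theorem \ref{thm:alg_hyp_complete} is satisfied. Fortunately, when $m = 1$ the inner ``for all $i \ne r$'' quantifier is empty, so the condition reduces to nonvanishing of $d_{(k)} = \prod_j d_j$, which is automatic for positive degrees. This is the only place where the scroll/join refinement supplied by Theorem \ref{thm:alg_hyp_scroll_bound_general} does real work in this corollary.
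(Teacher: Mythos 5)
Your proposal is correct and matches the paper's own (implicit) argument: the paper derives Corollary \ref{cor:ci_Pn} precisely by specializing Theorems \ref{thm:alg_hyp_complete} and \ref{thm:non_alg_hyp_ci} to $m=1$, $D=n$, $a_1=-(n+1)$, exactly as you do, including the observation that in the equality case $\sum_j d_j = 2n-k$ condition (2) holds vacuously apart from $d_{(k)}=\prod_j d_j\neq 0$.
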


\bibliography{mybibOUT}{}

\begin{thebibliography}{Dem97}

\bibitem[B{\u{a}}d78]{Badescu}
Lucian B{\u{a}}descu.
\newblock A remark on the {G}rothendieck-{L}efschetz theorem about the {P}icard
  group.
\newblock {\em Nagoya Math. J.}, 71:169--179, 1978.
\newblock \href {https://doi.org/10.1017/S002776300002170X}
  {\path{doi:10.1017/S002776300002170X}}.

\bibitem[Bro78]{Brody78}
Robert Brody.
\newblock Compact manifolds and hyperbolicity.
\newblock {\em Trans. Amer. Math. Soc.}, 235:213--219, 1978.
\newblock \href {https://doi.org/10.2307/1998216} {\path{doi:10.2307/1998216}}.

\bibitem[Cle86]{Clemens86}
Herbert Clemens.
\newblock Curves on generic hypersurfaces.
\newblock {\em Ann. Sci. \'Ecole Norm. Sup. (4)}, 19:629--636, 1986.
\newblock \href {https://doi.org/10.24033/asens.1521}
  {\path{doi:10.24033/asens.1521}}.

\bibitem[Cle87]{Clemens86Erratum}
H.~Clemens.
\newblock Erratum: ``{C}urves on generic hypersurfaces'' [{A}nn.\ {S}ci.\
  \'ecole {N}orm.\ {S}up.\ (4) {\bf 19} (1986), no.\ 4, 629--636; {MR}0875091
  (88c:14037)].
\newblock {\em Ann. Sci. \'Ecole Norm. Sup. (4)}, 20:281, 1987.
\newblock \href {https://doi.org/10.24033/asens.1534}
  {\path{doi:10.24033/asens.1534}}.

\bibitem[Cle03]{Clemens03}
Herbert Clemens.
\newblock Lower bounds on genera of subvarieties of generic hypersurfaces.
\newblock volume~31, pages 3673--3711. 2003.
\newblock Special issue in honor of Steven L. Kleiman.
\newblock \href {https://doi.org/10.1081/AGB-120022438}
  {\path{doi:10.1081/AGB-120022438}}.

\bibitem[CR04]{Clemens-Ran04}
Herbert Clemens and Ziv Ran.
\newblock Twisted genus bounds for subvarieties of generic hypersurfaces.
\newblock {\em American Journal of Mathematics}, 126:89--120, 2004.
\newblock URL: \url{http://www.jstor.org/stable/40068058}.

\bibitem[CR05]{Clemens-Ran_Erratum}
Herbert Clemens and Ziv Ran.
\newblock Erratum to: ``{T}wisted genus bounds for subvarieties of generic
  hypersurfaces'' [{A}mer. {J}. {M}ath. {\bf 126} (2004), no. 1, 89--120;
  mr2033565].
\newblock {\em Amer. J. Math.}, 127:241--242, 2005.
\newblock \href {https://doi.org/10.1353/ajm.2005.0003}
  {\path{doi:10.1353/ajm.2005.0003}}.

\bibitem[CR19]{Coskun-Riedl:Quintic}
Izzet Coskun and Eric Riedl.
\newblock Algebraic hyperbolicity of the very general quintic surface in
  {$\Bbb{P}^3$}.
\newblock {\em Adv. Math.}, 350:1314--1323, 2019.
\newblock \href {https://doi.org/10.1016/j.aim.2019.04.062}
  {\path{doi:10.1016/j.aim.2019.04.062}}.

\bibitem[CR23]{Coskun-Riedl:VGenSurf}
Izzet Coskun and Eric Riedl.
\newblock Algebraic hyperbolicity of very general surfaces.
\newblock {\em Israel J. Math.}, 253(2):787--811, 2023.
\newblock \href {https://doi.org/10.1007/s11856-022-2379-2}
  {\path{doi:10.1007/s11856-022-2379-2}}.

\bibitem[Dem97]{Demailly97}
Jean-Pierre Demailly.
\newblock Algebraic criteria for {K}obayashi hyperbolic projective varieties
  and jet differentials.
\newblock In {\em Algebraic geometry---{S}anta {C}ruz 1995}, volume 62, Part 2
  of {\em Proc. Sympos. Pure Math.}, pages 285--360. Amer. Math. Soc.,
  Providence, RI, 1997.
\newblock \href {https://doi.org/10.1090/pspum/062.2/1492539}
  {\path{doi:10.1090/pspum/062.2/1492539}}.

\bibitem[Ein88]{Ein:ci01}
Lawrence Ein.
\newblock Subvarieties of generic complete intersections.
\newblock {\em Invent. Math.}, 94(1):163--169, 1988.
\newblock \href {https://doi.org/10.1007/BF01394349}
  {\path{doi:10.1007/BF01394349}}.

\bibitem[Ein91]{Ein:ci02}
Lawrence Ein.
\newblock Subvarieties of generic complete intersections. {II}.
\newblock {\em Math. Ann.}, 289(3):465--471, 1991.
\newblock \href {https://doi.org/10.1007/BF01446583}
  {\path{doi:10.1007/BF01446583}}.

\bibitem[Gro62]{SGA2}
Alexandre Grothendieck.
\newblock {\em S{\'e}minaire de G{\'e}om{\'e}trie Alg{\'e}brique (SGA-2)}.
\newblock Bures-sur-Yvette, 1962.

\bibitem[Har70]{Hartshorne:AmpleSubvars}
Robin Hartshorne.
\newblock {\em Ample subvarieties of algebraic varieties}, volume 156 of {\em
  Lecture Notes in Mathematics}.
\newblock Springer-Verlag, Berlin-New York, 1970.
\newblock Notes written in collaboration with C. Musili.

\bibitem[HI21]{Haase-Ilten21}
Christian Haase and Nathan Ilten.
\newblock Algebraic hyperbolicity for surfaces in toric threefolds.
\newblock {\em J. Algebraic Geom.}, 30(3):573--602, 2021.
\newblock \href {https://doi.org/10.1090/jag/770} {\path{doi:10.1090/jag/770}}.

\bibitem[KS25]{KwonSeo}
Minseong Kwon and Haesong Seo.
\newblock Algebraic hyperbolicity of adjoint linear systems on spherical
  varieties.
\newblock 2025.
\newblock URL: \url{https://arxiv.org/pdf/2508.09414}.

\bibitem[Mio25]{Mioranci:HypsrfHomogenVars_v3}
Lucas Mioranci.
\newblock Algebraic hyperbolicity of very general hypersurfaces in homogeneous
  varieties.
\newblock \url{https://arxiv.org/abs/2307.10461v3}, 2025.

\bibitem[MY24]{Moraga-Yeong24}
Joaqu{\'\i}n Moraga and Wern Yeong.
\newblock A hyperbolicity conjecture for adjoint bundles.
\newblock 2024.
\newblock URL: \url{https://arxiv.org/pdf/2412.01811}.

\bibitem[Pac03]{Pacienza03}
Gianluca Pacienza.
\newblock Rational curves on general projective hypersurfaces.
\newblock {\em J. Algebraic Geom.}, 12:245--267, 2003.
\newblock \href {https://doi.org/10.1090/S1056-3911-02-00328-4}
  {\path{doi:10.1090/S1056-3911-02-00328-4}}.

\bibitem[Pac04]{Pacienza04}
Gianluca Pacienza.
\newblock Subvarieties of general type on a general projective hypersurface.
\newblock {\em Trans. Amer. Math. Soc.}, 356:2649--2661, 2004.
\newblock \href {https://doi.org/10.1090/S0002-9947-03-03250-1}
  {\path{doi:10.1090/S0002-9947-03-03250-1}}.

\bibitem[Rob23]{Robins23}
Sharon Robins.
\newblock Algebraic hyperbolicity for surfaces in smooth projective toric
  threefolds with {P}icard rank 2 and 3.
\newblock {\em Beitr. Algebra Geom.}, 64:1--27, 2023.
\newblock \href {https://doi.org/10.1007/s13366-021-00612-0}
  {\path{doi:10.1007/s13366-021-00612-0}}.

\bibitem[Seo25]{Seo:Surfaces_in_Fano_3folds}
Haesong Seo.
\newblock Algebraic hyperbolicity of surfaces in {F}ano threefolds with
  {P}icard number one.
\newblock 2025.
\newblock URL: \url{https://arxiv.org/pdf/2502.06365}.

\bibitem[Voi96]{Voisin96}
Claire Voisin.
\newblock On a conjecture of {C}lemens on rational curves on hypersurfaces.
\newblock {\em J. Differential Geom.}, 44:200--213, 1996.
\newblock \href {https://doi.org/10.4310/jdg/1214458743}
  {\path{doi:10.4310/jdg/1214458743}}.

\bibitem[Voi98]{Voisin96Erratum}
Claire Voisin.
\newblock A correction: ``{O}n a conjecture of {C}lemens on rational curves on
  hypersurfaces'' [{J}.\ {D}ifferential {G}eom.\ {\bf 44} (1996), no.\ 1,
  200--213; {MR}1420353 (97j:14047)].
\newblock {\em J. Differential Geom.}, 49:601--611, 1998.
\newblock \href {https://doi.org/10.4310/jdg/1214461112}
  {\path{doi:10.4310/jdg/1214461112}}.

\bibitem[Xu94]{Xu94}
Geng Xu.
\newblock Subvarieties of general hypersurfaces in projective space.
\newblock {\em J. Differential Geom.}, 39:139--172, 1994.
\newblock \href {https://doi.org/10.4310/jdg/1214454680}
  {\path{doi:10.4310/jdg/1214454680}}.

\bibitem[Yeo24]{Yeong:HypsrfProdProjSpace}
Wern Yeong.
\newblock Algebraic hyperbolicity of very general hypersurfaces in products of
  projective spaces.
\newblock {\em Israel J. Math.}, 2024.
\newblock \href {https://doi.org/10.1007/s11856-024-2693-y}
  {\path{doi:10.1007/s11856-024-2693-y}}.

\end{thebibliography}
\bibliographystyle{alphaurl}

\end{document}